\newtheorem{theorem}{Theorem}[section]
\newtheorem{proposition}[theorem]{Proposition}
\newtheorem{lemma}[theorem]{Lemma}
\theoremstyle{remark}
\newtheorem{remark}[theorem]{Remark}
\theoremstyle{definition}
\newtheorem{definition}[theorem]{Definition}
\numberwithin{equation}{section}
\numberwithin{theorem}{section}
\newcommand{\mc}[1]{{\mathcal #1}}
\newcommand{\bb}[1]{{\mathbb #1}}
\newcommand{\const}{(\mathrm{const})} 
\newcommand{\diag}{\mathrm{diag}}
\newcommand{\rme}{\mathrm{e}}
\newcommand{\rmi}{\mathrm{i}}
\newcommand{\bigo}{\mathcal O}
\newcommand{\epsyxstar}{\eps y x^*}
\newcommand{\eps}{\varepsilon}
\newcommand{\la}{\lambda}
\newcommand{\Lameps}{\Lambda_\varepsilon}
\newcommand{\aleps}{\alpha_\varepsilon}
\newcommand{\rhoeps}{\rho_\varepsilon}
\newcommand{\goes}{\rightarrow}
\newcommand{\lan}{\left\langle}
\newcommand{\ran}{\right\rangle}
\renewcommand{\i}{\mathrm{i}}
\begin{document}

\title[Structured pseudospectral measures of a Toeplitz matrix]
{Computing the structured pseudospectrum of a Toeplitz matrix and its extreme points}

\author [P.\ Butt\`a] {Paolo Butt\`a}
\address{Paolo Butt\`a, Dipartimento di Matematica, SAPIENZA Universit\`a di Roma, P.le Aldo Moro 5, 00185 Roma, Italy}
\email{butta@mat.uniroma1.it}

\author [N.\ Guglielmi] {Nicola Guglielmi}
\address{Nicola Guglielmi, Dipartimento di Matematica Pura ed Applicata, Universit\`a degli Studi di L'Aquila, Via Vetoio - Loc. Coppito, 67010 L'Aquila, Italy} 
\email{guglielm@univaq.it} 

\author [S.\ Noschese] {Silvia Noschese}
\address{Silvia Noschese, Dipartimento di Matematica, SAPIENZA Universit\`a di Ro\-ma, P.le Aldo Moro 5, 00185 Roma, Italy}
\email{noschese@mat.uniroma1.it}

\subjclass[2010]{65F15, 65L07.}

\keywords{Pseudospectrum, structured pseudospectrum, eigenvalue, 
spectral abscissa, spectral radius, Toeplitz structure.} 

\begin{abstract}
The computation of the structured pseudospectral abscissa and radius (with respect to the
Frobenius norm) of a Toeplitz matrix is discussed and two algorithms based on a low rank 
property to construct extremal perturbations are presented.
The algorithms are inspired by those considered in \cite{GO11} for the unstructured
case, but their extension to structured pseudospectra and analysis presents several difficulties. 
Natural generalizations of the algorithms, allowing to draw significant sections of the structured 
pseudospectra in proximity of extremal points are also discussed. Since no algorithms are 
available in the literature to draw such structured pseudospectra, the approach we present
seems promising to extend existing software tools (Eigtool \cite{Wri02}, Seigtool \cite{KKK10}) to structured
pseudospectra representation for Toeplitz matrices. 
We discuss local convergence properties of the algorithms and show some applications to
a few illustrative examples. 
\end{abstract}

\maketitle
\thispagestyle{empty}

\section{Introduction}
\label{sec:1}

There is a growing development of structure-preserving algorithms for  structured problems. Toeplitz matrices arise in many applications, including the solution of ordinary  differential equations, whence  it is meaningful to investigate  the sensitivity of the eigenvalues of a Toeplitz matrix with respect to finite structure-preserving perturbations and,  mainly,  the sensitivity of  the rightmost eigenvalue. The structure is given by the location of the nonzero diagonals of the matrix.

We add the structure requirement to the classical definition of $\eps$-pseudo\-spec\-trum; see, e.g., \cite{TE}. Given $\eps>0$, the  structured $\eps$-pseudospectrum  of a given Toeplitz matrix $A \in \mathbb{C}^{n \times n}$  is the set of all eigenvalues of $A+\eps E$ for some Toeplitz matrix $E \in \mathbb{C}^{n \times n}$ with unitary norm,  and with the same sparsity structure as $A$.  As an example, if $A$ is a tridiagonal Toeplitz matrix, we consider all tridiagonal Toeplitz perturbation matrices of norm equal to $\eps$. The structured pseudospectral abscissa is the maximal real part of points in the structured pseudospectrum. 

We remark that the notion of $\eps$-pseudospectrum depends on the choice of the matrix norm. In literature, the spectral norm has been largely used also in the  structured case\cite{BGK01,G06,R06}. Guglielmi and Overton presented in \cite{GO11} an efficient algorithm for computing the pseudospectral abscissa in the spectral norm.  For our purposes, the Frobenius norm turns out to be the most appropriate. Since the points in a structured pseudospectrum are exact eigenvalues of some nearby  Toeplitz matrix with the same structure diagonals  as $A$, we are in a position to use results from the  literature concerning the eigenvalue sensitivity to machine perturbations, that is to say infinitely small structured perturbations. The structured condition numbers of an eigenvalue $\lambda \in A$ is indeed a first-order measure of the worst-case effect  on $\lambda$ of perturbations of the same structure as  $A$. The structured conditioning measures we deal with can be computed endowing the subspace of matrices  with the Frobenius norm; see, e.g., \cite{HH,KKT,NP} and references therein.

Here we are concerned with the computation of the rightmost points in the structured pseudo\-spectrum  of a Toeplitz matrix. Since we are limiting finite perturbations to a given Toeplitz structure, it is not surprising that the main difference in our extension of  the algorithm in \cite{GO11} consists in replacing the classical eigenprojection for a simple eigenvalue with its structured analogue (normalized in the Frobenius norm).

We remark that we may generalize the above statements  to non-real Hankel matrices, considering antidiagonals in place of diagonals. Similarly, other symmetry-pattern nonnormal matrices can be treated (a symmetry-pattern being a structure that exhibits a kind of symmetry, like reflection or translation  \cite{NP}); for instance, general persymmetric, skew-persymmetric, complex symmetric or complex skew-symmetric matrices. In all cases, matrix perturbations with the given sparsity and symmetry-pattern have to be considered.

In this paper we thoroughly investigate the tridiagonal Toeplitz structure. The motivation is that the eigenvalues and eigenvectors of tridiagonal Toeplitz matrices are known in closed form, and all  ingredients of our analysis are easily computable \cite{NPR}.  Additionally, it is well known that the boundary of the $\eps$-pseudospectrum in spectral norm of a tridiagonal Toeplitz matrix approximates an ellipse, as $\varepsilon$ approaches zero and the dimension $n$ goes to infinity \cite{RT}, and  a slightly modified version of the algorithm in \cite{GO11},  which succeeds in plotting the boundary of the $\eps$-pseudospectrum, designs  in fact  an ellipse in the tridiagonal Toeplitz case.  Analogously, we adapt the new algorithm in order to investigate the boundary of the structured $\eps$-pseudospectrum  in the Frobenius norm.

The paper is organized as follows. In Section \ref{sec:2} we define the algorithm and show how to modify it to compute also the pseudospectral radius, and partially draw the pseudospectral boundary. In Section \ref{sec:3} we characterize the fixed points of the algorithm in the tridiagonal case. In Section \ref{sec:4} we derive a local convergence analysis, establishing that the algorithm is linearly convergent to local maximizers of the structured pseudospectral abscissa. Finally, in Section \ref{sec:5} the algorithms are tested on some examples.

\section{The algorithm}
\label{sec:2}

We start with some notation and definition. Given a Toeplitz matrix $A\in \bb C^{n\times n}$ we denote by $\mc S$ the subspace of all Toeplitz matrices in $\bb C^{n\times n}$ with same sparsity structure as $A$. We denote by $M|_\mc S$ the matrix in $\mc S$ closest to $M\in \bb C^{n\times n}$ with respect to the Frobenius norm. It is straightforward to verify that $M|_\mc S$ is obtained by replacing in each structure diagonal all the entries of $M$ with their arithmetic mean. We also define the normalized projection,
where $\|\cdot\|_F$ stands for the Frobenius norm,
\begin{equation*}
M|_\mc T := \frac{M|_\mc S}{\|M|_\mc S\|_F}\;.
\end{equation*}

If $\lambda$ is a simple eigenvalue of a matrix $M\in \bb C^{n\times n}$, a corresponding pair of right and left eigenvectors  $x$ and $y$ are said normalized to be RP-compatible if $\|x\|_2 = \|y\|_2=1$ and $y^*x$ is real and positive. 
\begin{lemma}[see \cite{NP}]
\label{lem:p1}
Let $\lambda$ be a simple eigenvalue of a Toeplitz matrix $A$ with corresponding right and left eigenvectors $x$ and $y$ normalized to be RP-compatible. Given any Toeplitz matrix $E$ with $\|E\|_F = 1$, let $\lambda_E(t)$ be an eigenvalue of $A+tE$ converging to $\lambda$ as $t\to 0$. Then,
$$
|\dot\lambda_E(0)| \le \max\left\{\left|\frac{y^*Gx}{y^*x}\right| , \; \|G\|_F = 1,\, G \in \mc S\,\right\} = \frac{\|yx^*|_{\mc S}\|_F}{y^*x}
$$
and
\begin{eqnarray}
\label{f2}
\nonumber
\dot\lambda_E(0) & = & \frac{\|yx^*|_{\mc S}\|_F}{y^*x} >0\qquad \mathrm{if} \qquad E=yx^*|_{\mc T}\;.
\end{eqnarray}
\end{lemma}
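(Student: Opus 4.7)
The plan is to combine the standard first-order eigenvalue perturbation formula with an orthogonal projection argument in the Frobenius inner product. First, since $\lambda$ is a simple eigenvalue of $A$, analytic perturbation theory gives the well-known expression
\begin{equation*}
\dot\lambda_E(0) = \frac{y^*Ex}{y^*x},
\end{equation*}
where $y^*x>0$ by the RP-compatible normalization. Hence both the bound and the equality case reduce to analyzing the numerator $y^*Ex$ as $E$ ranges over matrices in $\mc S$ with $\|E\|_F=1$.

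Next I would rewrite the numerator as a Frobenius inner product: using $\langle M,N\rangle_F=\mathrm{tr}(N^*M)$, one has $y^*Ex = \mathrm{tr}(xy^*E)=\langle E, yx^*\rangle_F$. The key observation is that the map $M\mapsto M|_\mc S$ described in the paper, which averages the entries along each structure diagonal, is precisely the orthogonal projection onto $\mc S$ with respect to the Frobenius inner product (it is the unique minimizer in $\mc S$ of $\|M-\cdot\|_F$). Consequently, for $E\in\mc S$ we may replace $yx^*$ by its $\mc S$-projection without changing the inner product:
\begin{equation*}
y^*Ex = \langle E, yx^*|_\mc S\rangle_F.
\end{equation*}

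The upper bound now follows immediately from the Cauchy–Schwarz inequality in the Frobenius inner product: $|y^*Ex|\le \|E\|_F\,\|yx^*|_\mc S\|_F = \|yx^*|_\mc S\|_F$. Dividing by $y^*x$ yields the stated inequality and identifies the maximum of $|y^*Gx/y^*x|$ over $G\in\mc S$ with $\|G\|_F=1$. For the equality case I would take $E=yx^*|_\mc T = yx^*|_\mc S/\|yx^*|_\mc S\|_F$, which is a unit-norm element of $\mc S$; a direct substitution gives $y^*Ex=\langle yx^*|_\mc S, yx^*|_\mc S\rangle_F / \|yx^*|_\mc S\|_F = \|yx^*|_\mc S\|_F$, which is real and positive, so dividing by the positive real number $y^*x$ produces the advertised value of $\dot\lambda_E(0)$.

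There is no serious obstacle here; the only subtle point to flag carefully is that $M|_\mc S$ is the Frobenius-orthogonal projection (this is why Cauchy–Schwarz saturates on an element of $\mc S$ rather than on the unattainable $yx^*$), and that the RP-compatible normalization is exactly what makes the numerator and denominator simultaneously real and positive at the maximizer, so no absolute value is needed in the final equality.
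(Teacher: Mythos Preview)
Your proof is correct and follows essentially the same approach the paper relies on: the lemma is not proved in the paper itself but cited from \cite{NP}, and the subsequent remark computes the equality case via the identity $y^*(yx^*|_{\mc S})x = \|yx^*|_{\mc S}\|_F^2$, which is precisely what your Frobenius inner product and orthogonal projection argument establishes. There is nothing to add.
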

\begin{remark}
If the right and left eigenvectors are normalized so that $\|x\|_2 = \|y\|_2 = 1$ and arg$(y^*x) = - \theta$ then arg$(\dot\lambda_E(0) )= \theta$ if $E=yx^*|_{\mc T}$.  Indeed, since $y^*(yx^*)|_{\mc S}x = \|yx^*|_{\mc S}\|_F^2$ (see \cite[Lemma 3.2]{NP}),
$$
\dot \lambda_E(0) = \frac{y^*(yx^*)|_{\mc S}x}{\|yx^*|_{\mc S}\|_F} \frac{1}{y^*x}= \frac{\|yx^*|_{\mc S}\|_F}{y^*x}\;.
$$
\end{remark}
Lemma \ref{lem:p1} allows to extend the algorithm introduced in \cite{GO11} to the case of Toeplitz structure.

\subsection{Pseudospectral abscissa}

We define  
\begin{eqnarray}
\aleps^{\mc T} (A) & = & \max\{\Re(\lambda)\colon\lambda \in \Lameps^{\mc T}(A) \}\;, 
\label{eq:psa}
\nonumber
\end{eqnarray}
the structured pseudospectral abscissa, where 
\[
\Lameps^{\mc T}(A) = \left\{ \lambda \in \bb C \colon \lambda \in \Lambda(A+E) \quad \mbox{with} \ E \in {\mc S}, \| E \|_F \le \eps  \right\}\;.
\]

The following algorithm allows to compute locally rightmost points of the $\eps$-pseudospectrum.

\noindent{\bf Algorithm 1.} Let  $\lambda_0$ be a rightmost eigenvalue of a given Toeplitz matrix $A\in \bb C^{n\times n}$ with corresponding right and left eigenvectors  $x_0$ and $y_0$ normalized to be RP-compatible. Set $B_1 = A+\eps\, y_0x_0^*|_\mc T$. 

For $k=1,2,\ldots$, let $\lambda_k$ be a rightmost eigenvalue of $B_k$ closest to $\lambda_{k-1}$. Let $x_k$ and $y_k$ be corresponding right and left eigenvectors normalized to be RP-compatible. Set $B_{k+1} = A+\eps\, y_kx_k^*|_\mc T$.  $\diamond$

\medskip

We denote by $M_\eps$ the iteration map associated to Algorithm 1, i.e.
\begin{equation*}
y_{k-1} x_{k-1}^*\big|_\mc T \quad \overset{M_\eps}{\longrightarrow} \quad y_k x_k^*\big|_\mc T\;.
\end{equation*}
By the definition of the algorithm it follows immediately that the fixed points of $M_\eps$ are given by the pairs $(x,y)$ solution to
\begin{equation}
\label{p1}
\left\{\begin{array}{l}
y^* \left(A+\eps\, yx^*|_\mc T \right)= \lambda y^*\;, \\[0.2cm] 
\left(A+\eps\, yx^*|_\mc T \right) x = \lambda x\;.
\end{array}\right.
\end{equation}

\subsection{Local maxima and stationary points of Algorithm 1}

We are now interested to relate locally rightmost points of the $\eps$-pseudo\-spectrum
to stationary points of our algorithm, that is fixed points of the map $M_\eps$.
Let
\[
{\mc M} = \{ E \in \bb C^{n\times n} : E \ \in {\mc S}, \ \| E \|_F = 1 \}
\]
and consider the differential equation 
\begin{equation}
\left\{ \begin{array}{l}
\dot{E}(t) = y(t)\,x(t)^* \big|_{\mc T} -\lan E(t), y(t)\,x(t)^* \big|_{\mc T} \ran\, E(t)
\\[0.2cm]
E(0) = E_0 \in \mc M
\end{array}
\right.
\label{eq:ode}
\end{equation}
where $\lan \cdot,\cdot \ran$ denotes the Frobenius inner product, i.e., $\langle E,F \rangle = {\rm trace} (E^* F)$,
and $y(t), x(t)$ are respectively the left and right eigenvectors associated to the rightmost eigenvalue of $A+\eps E(t)$, which we assume to be simple, normalized such that $y(t)^* x(t) > 0$  (this is similar to the differential equation analyzed in \cite{GL11,GL12} in the case of standard complex and real pseudospectra). 

We easily observe that $\dot{E} \in T_E(\mc M)$ (the tangent hyperplane to ${\mc M}$ at $E$) 
which implies that $E(t) \in \mc M$ for all $t$. In fact, $\dot E(t) \in {\mc S}$ and 
$\langle E(t), \dot{E}(t) \rangle = 0$.
\begin{lemma}[Equilibria]
\label{lem:equil}
If $x$ and $y$ fulfil \eqref{p1} with $y^* x \ne 0$, then $E = y x^* \big|_{\mc T}$
is an equilibrium of \eqref{eq:ode}. Viceversa, if $E$ is an equilibrium of {\rm (\ref{eq:ode})} then $E = y x^* \big|_{\mc T}$.
\end{lemma}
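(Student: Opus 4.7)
The plan is to verify both directions by plugging into the right-hand side of the ODE (\ref{eq:ode}) and using Lemma \ref{lem:p1} together with the basic identities for the normalized structured projection.

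For the forward direction, I would set $E := yx^*|_{\mc T}$ where $(x,y)$ solves (\ref{p1}) with $y^*x > 0$. By definition of the normalized projection $E \in \mc S$ and $\|E\|_F = 1$, so $E \in \mc M$. The two equations in (\ref{p1}) assert that $y$ and $x$ are respectively left and right eigenvectors of $A + \eps E$ for the eigenvalue $\lambda$; assuming this is the simple rightmost eigenvalue of $A + \eps E$ (so that $y,x$ are the vectors used in the vector field of (\ref{eq:ode}) at $E$), the key evaluation is
\[
\langle E, yx^*|_{\mc T}\rangle \;=\; \langle E, E\rangle \;=\; \|E\|_F^2 \;=\; 1,
\]
which collapses the right-hand side of (\ref{eq:ode}) to $E - 1\cdot E = 0$, so $E$ is an equilibrium.

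For the converse, I would take an equilibrium $E \in \mc M$ and let $y, x$ be the associated left and right eigenvectors of $A + \eps E$ at its simple rightmost eigenvalue $\lambda$, normalized by $y^*x > 0$. Writing $\alpha := \langle E, yx^*|_{\mc T}\rangle$, the equilibrium condition $\dot E = 0$ becomes the one-line identity
\[
yx^*|_{\mc T} \;=\; \alpha\, E.
\]
Taking Frobenius norms of both sides and using $\|yx^*|_{\mc T}\|_F = \|E\|_F = 1$ gives $|\alpha| = 1$, so $E$ and $yx^*|_{\mc T}$ coincide up to a unimodular factor. To pin the phase down to $\alpha = 1$, I would substitute the ansatz $E = \bar\alpha\, yx^*|_{\mc T}$ into the right eigenvalue relation $(A + \eps E)x = \lambda x$, left-multiply by $y^*$, and invoke $y^*(yx^*|_{\mc T})x = \|yx^*|_{\mc S}\|_F$ (from the Remark after Lemma \ref{lem:p1}) to obtain
\[
\lambda\, y^*x \;=\; y^*Ax + \eps\,\bar\alpha\,\|yx^*|_{\mc S}\|_F.
\]
The RP-compatibility convention ($y^*x$ real positive) combined with the rightmost-eigenvalue selection then fixes $\bar\alpha = 1$, yielding $E = yx^*|_{\mc T}$, and reading off the left/right eigenvalue relations for $A + \eps E$ produces exactly (\ref{p1}).

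The principal obstacle is this phase-fixing step in the converse. The equilibrium identity alone only determines $E$ as a unit complex multiple of $yx^*|_{\mc T}$; the interplay of the RP-normalization with the rightmost-eigenvalue selection is what rules out the remaining unimodular freedom. Once that is settled, everything else in the proof reduces to routine substitutions in the vector field of (\ref{eq:ode}).
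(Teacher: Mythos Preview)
Your forward direction is correct and matches the paper. For the converse, the paper argues in one line via Cauchy--Schwarz: since $E\in\mc M$ and $\|yx^*|_{\mc T}\|_F=1$, one has $|\langle E,yx^*|_{\mc T}\rangle|\le 1$, and the equilibrium condition $yx^*|_{\mc T}=\langle E,yx^*|_{\mc T}\rangle\,E$ forces equality, hence alignment of $E$ with $yx^*|_{\mc T}$. Your derivation that $|\alpha|=1$ is exactly this step, so the core of your argument coincides with the paper's.

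The gap is in your phase-fixing step, which you yourself flag as the principal obstacle. The identity
\[
\lambda\,y^*x \;=\; y^*Ax \;+\; \eps\,\bar\alpha\,\|yx^*|_{\mc S}\|_F
\]
does not force $\bar\alpha=1$. Here $y^*Ax$ is a complex number depending on $x,y$, which in turn depend on $E=\bar\alpha\,yx^*|_{\mc T}$; there is no independent equation that pins down its imaginary part separately from $\arg\bar\alpha$. The ``rightmost'' selection only asserts that $\Re\lambda$ is maximal \emph{among the eigenvalues of the fixed matrix} $A+\eps E$; it says nothing about maximality over choices of the phase of $E$, so it gives no leverage on $\arg\bar\alpha$. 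Indeed, for any unimodular $c$, the matrix $E=c\,yx^*|_{\mc T}$ satisfies the algebraic equilibrium identity $yx^*|_{\mc T}=\langle E,yx^*|_{\mc T}\rangle\,E$ whenever $y,x$ happen to be the RP-normalized eigenvectors of $A+\eps E$; nothing in your argument excludes $c\ne 1$. The paper's proof is equally terse on this point (it simply asserts that the Cauchy--Schwarz equality case gives $E=yx^*|_{\mc T}$ rather than a unimodular multiple), so the issue you identify is genuine, but your proposed resolution is not yet a proof.
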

\begin{proof}
The first statement is immediate. For the second one, as $E(t) \in \mc M$, by Cauchy-Schwarz inequality we have
\begin{equation}
\label{eq:r3}
\big| \lan E(t), y(t)\,x(t)^* \big|_{\mc T} \ran \big| \le 1\;,
\end{equation}
where equality occurs if and only if $E(t) = y(t)\,x(t)^* \big|_{\mc T}$. Therefore, $E(t)=\bar E$ is an equilibrium of \eqref{eq:ode} if and only if $\bar E=y\,x^*\big|_{\mc T}$, which implies that $x$ and $y$ fulfil (\ref{p1}). 
\end{proof}
\begin{lemma}[Monotonicity property of the flow]
The solution of the differential equation {\rm (\ref{eq:ode})} 
is characterized by the following property for the 
rightmost eigenvalue $\lambda(t)$ of $A+\eps E(t)$:
\begin{eqnarray}
\Re \big( \dot{\lambda}(t) \big) & \ge & 0 \qquad \forall\, t \ge 0\;,
\label{eq:pos}
\nonumber
\end{eqnarray}
\label{th:posder}
where equality occurs if and only if $E(t)=\bar E$, with $\bar E$ an equilibrium.
\end{lemma}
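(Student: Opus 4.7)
The plan is to reduce the claim to a direct computation of $\dot\lambda(t)$ using first-order eigenvalue perturbation theory, and then recognize that the quantity controlling the sign is exactly the Cauchy–Schwarz slack appearing in \eqref{eq:r3}.

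First I would invoke the standard formula for the derivative of a simple eigenvalue: since $\lambda(t)$ is simple and $y(t), x(t)$ are the associated RP-compatible left/right eigenvectors of $A+\eps E(t)$, Lemma \ref{lem:p1} applied along the curve $E(t)$ (via the chain rule) gives
\begin{equation*}
\dot\lambda(t) \;=\; \eps\,\frac{y(t)^* \dot E(t)\, x(t)}{y(t)^* x(t)}\;.
\end{equation*}
Substituting the right-hand side of \eqref{eq:ode} for $\dot E(t)$, I would split this into two terms and handle them separately. For the first term I would use the identity $y^*(yx^*)|_{\mc S}\, x = \|yx^*|_{\mc S}\|_F^2$ recalled in the remark following Lemma \ref{lem:p1}, which yields
\begin{equation*}
y^* \bigl(yx^*\big|_{\mc T}\bigr) x \;=\; \frac{y^*(yx^*)|_{\mc S}\,x}{\|yx^*|_{\mc S}\|_F} \;=\; \|yx^*|_{\mc S}\|_F \;>\; 0\;.
\end{equation*}

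For the second term, the key observation is that $E(t) \in \mc S$, so orthogonality of $yx^* - yx^*|_{\mc S}$ to $\mc S$ gives $y^* E x = \langle yx^*, E\rangle = \langle yx^*|_{\mc S}, E\rangle$, and correspondingly $\langle E, yx^*|_{\mc T}\rangle = \overline{y^* E x}/\|yx^*|_{\mc S}\|_F$. Multiplying,
\begin{equation*}
\lan E, yx^*|_{\mc T}\ran \, y^* E x \;=\; \frac{|y^* E x|^2}{\|yx^*|_{\mc S}\|_F}\;,
\end{equation*}
which is real and non-negative. Combining the two pieces, I would obtain the clean formula
\begin{equation*}
y^* \dot E\, x \;=\; \frac{\|yx^*|_{\mc S}\|_F^2 - |y^* E x|^2}{\|yx^*|_{\mc S}\|_F}\;,
\end{equation*}
which is real and, by Cauchy–Schwarz $|y^*Ex| = |\langle yx^*|_{\mc S},E\rangle| \le \|yx^*|_{\mc S}\|_F\,\|E\|_F = \|yx^*|_{\mc S}\|_F$, non-negative. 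Dividing by the positive real number $y^* x$, I conclude $\dot\lambda(t) \ge 0$ in $\bb R$, so in particular $\Re(\dot\lambda(t)) \ge 0$.

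Finally, the equality case: $\Re(\dot\lambda(t)) = 0$ forces $|y^* E x| = \|yx^*|_{\mc S}\|_F$, i.e., equality in Cauchy–Schwarz, which is precisely equality in \eqref{eq:r3}. By Lemma \ref{lem:equil} this is equivalent to $E(t)$ being an equilibrium of \eqref{eq:ode}. The only mildly delicate point I expect is the second term computation, where one must be careful that $E \in \mc S$ lets $yx^*$ be replaced by its projection onto $\mc S$ inside the inner products, so that the same norm $\|yx^*|_{\mc S}\|_F$ appears in the numerator and denominator and produces the sign-definite Cauchy–Schwarz slack; once this is clear the monotonicity and its equality case fall out without further work.
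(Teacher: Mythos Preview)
Your proof is correct and follows essentially the same route as the paper's: the eigenvalue perturbation formula, the identity $y^*(yx^*|_{\mc S})x=\|yx^*|_{\mc S}\|_F^2$, and Cauchy--Schwarz on $\mc M$. The one difference is a mild sharpening: the paper bounds the second term via two separate inequalities, $|\langle E,yx^*|_{\mc T}\rangle|\le 1$ and $|y^*Ex|\le\|yx^*|_{\mc S}\|_F$, and only obtains $\Re(y^*\dot E x)\ge 0$; you instead observe that $\langle E,yx^*|_{\mc T}\rangle=\overline{y^*Ex}/\|yx^*|_{\mc S}\|_F$, so the product collapses to the real quantity $|y^*Ex|^2/\|yx^*|_{\mc S}\|_F$, yielding the exact formula $y^*\dot E\,x=\big(\|yx^*|_{\mc S}\|_F^2-|y^*Ex|^2\big)/\|yx^*|_{\mc S}\|_F$. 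This shows $\dot\lambda(t)$ is actually real and non-negative, and reduces the equality case to a single Cauchy--Schwarz equality, which (as you note) is exactly equality in \eqref{eq:r3} and hence, by the argument in Lemma~\ref{lem:equil}, equivalent to $E(t)$ being an equilibrium.
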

\begin{proof}
Having assumed $y(t)^* x(t) > 0$ for $t \ge 0$, we have to show that 
\[
\Re\left( y(t)^* \dot{E}(t) x(t) \right) \ge 0\;.
\]
Observing that, by Lemma \ref{lem:p1},
\begin{eqnarray}
&& \Re \Big( y(t)^*\,\left( y(t)\,x(t)^* \big|_{\mc T} \right) \,x(t) \Big) = \big\| y(t) x(t)^* \big|_\mc S \big\|_F\;,
\label{eq:r1}
\nonumber
\\[0.2cm]
&& \big| y(t)^*\,E(t)\,x(t) \big| \le \big\| y(t) x(t)^* \big|_\mc S \big\|_F\;,
\label{eq:r2}
\nonumber
\end{eqnarray}
and using \eqref{eq:r3}, we get the result.
\end{proof}
\begin{theorem}
\label{th:2.5}
Assume that $\la$ is a local maximum on $\partial \Lambda_\eps(A)$; then $\lambda \in \Lambda(A + \eps E)$, where $E = y\,x^* \big|_{\mc T}$ with $x$ and $y$ satisfying \eqref{p1}.
\end{theorem}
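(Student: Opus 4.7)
The plan is to drive the ODE \eqref{eq:ode} from a perturbation that realizes $\lambda$ on $\partial\Lameps^{\mc T}(A)$ and pit the monotonicity from Lemma \ref{th:posder} against the local maximality hypothesis. Since $\lambda\in\partial\Lameps^{\mc T}(A)$ is an eigenvalue of $A+E_0$ for some $E_0\in\mc S$ with $\|E_0\|_F=\eps$, I set $\tilde E_0:=E_0/\eps\in\mc M$ and let $E(t)$ solve \eqref{eq:ode} with $E(0)=\tilde E_0$. Let $\lambda(t)$ be the rightmost eigenvalue of $A+\eps E(t)$, assumed simple near $t=0$ as in the setup of \eqref{eq:ode}; then $t\mapsto\lambda(t)$ is continuously differentiable, $\lambda(0)=\lambda$, and $\lambda(t)\in\Lameps^{\mc T}(A)$ for every $t\ge 0$ since $\|\eps E(t)\|_F=\eps$.

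By Lemma \ref{th:posder}, $\Re(\lambda(t))$ is non-decreasing. On the other hand, the hypothesis that $\lambda$ is a local maximum of $\Re$ on $\partial\Lameps^{\mc T}(A)$ yields, together with the closedness of the pseudospectrum and the fact that any interior point of $\Lameps^{\mc T}(A)$ close to a rightmost boundary point $\lambda$ can be joined to $\partial\Lameps^{\mc T}(A)$ by a short rightward horizontal segment hitting the boundary again near $\lambda$, that $\Re(\mu)\le\Re(\lambda)$ for all $\mu\in\Lameps^{\mc T}(A)$ in a neighborhood of $\lambda$. By continuity of $\lambda(t)$, this gives $\Re(\lambda(t))\le\Re(\lambda)$ on some interval $[0,t_0]$, and combined with monotonicity it forces $\Re(\lambda(t))\equiv\Re(\lambda)$ there, hence $\Re(\dot\lambda(0))=0$. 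The equality case of Lemma \ref{th:posder} then implies that $\tilde E_0$ is an equilibrium of \eqref{eq:ode}, and Lemma \ref{lem:equil} identifies it as $\tilde E_0=y\,x^*|_{\mc T}$ with $(x,y)$ satisfying \eqref{p1}. Since $\lambda\in\Lambda(A+\eps\tilde E_0)$, this yields $\lambda\in\Lambda(A+\eps\, yx^*|_{\mc T})$, proving the theorem.

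The main obstacle is the boundary-to-interior promotion of local maximality in the second paragraph: a priori the monotone trajectory $\lambda(t)$ could slip into the interior of the pseudospectrum with $\Re>\Re(\lambda)$, destroying the upper bound that I want to pit against monotonicity. Once that geometric step is secured — or once one simply reads the hypothesis as a local maximum of $\Re$ on $\Lameps^{\mc T}(A)$, which is the operative notion near a rightmost point — the remainder is a crisp \emph{``non-decreasing function bounded by its initial value is constant''} deduction, combined with the strict-inequality characterization of equilibria embedded in \eqref{eq:r3}.
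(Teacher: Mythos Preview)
Your proof is correct and follows essentially the same approach as the paper's: start the flow \eqref{eq:ode} from a perturbation realizing $\lambda$, invoke the monotonicity Lemma~\ref{th:posder}, and use the equilibrium characterization of Lemma~\ref{lem:equil} to identify the perturbation as $yx^*|_{\mc T}$. The paper phrases this as a two-line contradiction (if $E\ne yx^*|_{\mc T}$ then $\Re(\dot\lambda(0))>0$) and silently reads ``local maximum on $\partial\Lambda_\eps(A)$'' as a locally rightmost point of the full pseudospectrum, so the boundary-versus-interior issue you flag is simply taken for granted there.
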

\begin{proof}
We argue by contradiction and assume $\lambda \in \Lambda(A + \eps E)$ with $E \ne y\,x^* \big|_{\mc T}$. By Lemma \ref{lem:equil}, this implies that $E$ is not an equilibrium of \eqref{eq:ode}. Denoting by $E(t)$ the solution to \eqref{eq:ode} with initial datum 
$E_0=E$, by Lemma \ref{th:posder} we get the strict inequality $\dot{\lambda}(0)>0$, which would imply that $\la$ is not a local maximum. 
\end{proof}

\subsection{Pseudospectral radius}

We define 
\begin{eqnarray}
     \rhoeps^{\mc T} (A) & = & \max\{|\lambda|: \lambda \in \Lameps^{\mc T}(A) \}\;. 
     \label{eq:psr}
     \nonumber
\end{eqnarray}
the structured pseudospectral radius.
The following simple variant of Algorithm 1 allows to compute 
locally extremal points of the $\eps$-pseudospectrum, with maximal modulus.

\noindent{\bf Algorithm 2.} Let  $\lambda_0$ be an eigenvalue with largest modulus of a Toeplitz matrix $A\in \bb C^{n\times n}$ with corresponding right and left eigenvectors  $x_0$ and $y_0$ normalized to be RP-compatible. Set $B_1 = A+\eps\, {\rm e}^{\i \arg(\lambda_0)}y_0x_0^*|_\mc T$. 

For $k=1,2,\ldots$, let $\lambda_k$ be an eigenvalue with largest modulus of $B_k$ closest to $\lambda_{k-1}$. Let $x_k$ and $y_k$ be corresponding right and left eigenvectors normalized to be RP-compatible. Set $B_{k+1} = A+\eps\, {\rm e}^{\i \arg(\lambda_{k})}\, y_k x_k^*|_\mc T$. $\diamond$

\medskip

By the definition of Algorithm 2, it follows immediately that the fixed points of the associated map are given by the pairs $(x,y)$ solution to
\[
\left\{\begin{array}{l}
y^* \left(A+\eps\, {\rm e}^{\i \arg(\lambda)}\, yx^*|_\mc T \right)= \lambda y^*\;, \\[0.2cm] 
\left(A+\eps\, {\rm e}^{\i \arg(\lambda)}\, yx^*|_\mc T \right) x = \lambda x\;.
\end{array}\right.
\]
We next introduce the differential equation for $E(t)\in \mc M$ obtained by replacing $y(t)$ by $\lambda(t)  y(t)$ in the right-hand side of \eqref{eq:ode}. It is straightforward that the analog of Lemma \ref{th:posder} applies in the present context. Moreover, as
\[
\frac{d \big(|\lambda(t)|^2\big)}{d t} = 2  \Re \big( \bar\lambda(t) \dot\lambda(t) \big)\;,
\]
a monotonicity property for $|\lambda(t)|$ holds for such flow. Therefore, arguing analogously to the proof of Theorem \ref{th:2.5}, we can conclude that every point $\lambda \in \Lameps^{\mc T}(A)$ which locally maximizes $|\lambda|$, has to be a stationary point of Algorithm 2.

\subsection{Rotated computation}
\label{subs:rot}

In order to partially compute the boundary of the pseudospectrum, we can apply the algorithm to a rotated matrix ${\rm e}^{-i \theta} A$ to reach the boundary along the direction with angle ${}\theta$. Indeed we are able to compute rightmost points of the rotated pseudospectrum by our algorithm and draw them after a rotation back. This allows us to represent some convex sections of the boundary and draw a set which includes the pseudospectrum (see the subsequent Section \ref{sec:5} for some illustrative examples).

\subsection{Boundary of the $\boldsymbol{\eps}$-pseudospectrum} 

We are interested to investigate whe\-ther the number of rightmost points of the $\eps$-pseudospecrum has to be finite. This is done rigorously in the next section for the case of tridiagonal Toepliz matrices. Concerning the general case, we are able to show that such a number is finite at least when $\eps$ is small enough. Indeed, Theorem \ref{th:epssmall} shows that if $\eps$ is sufficiently small then the algorithm locally converges to its fixed points, whence they have to be isolated. 

\section{The tridiagonal case}
\label{sec:3}

In this section we study the system \eqref{p1} in the simpler case of tridiagonal Toeplitz matrices. We shall use the notation $T(s,d,t)$ for the tridiagonal Toeplitz matrix with $s,d,t$ as sub-diagonal, diagonal, and super-diagonal entries respectively. Recall that the spectrum of $T=T(s,d,t)$ is given by 
\begin{eqnarray}
\label{betterR}
\nonumber
\lambda _{h}(T) & = & d +2\sqrt{|st|}\rme^{\rmi (\arg s + \arg t)/2}\,\cos \frac{h\pi}{n+1}\,,\quad h=1:n\;.
\end{eqnarray}
Let $A=T(\sigma_0,\delta_0,\tau_0)  \in \bb C^{n\times n}$ with 
\begin{equation}
\label{ass}
\sigma_0\tau_0\ne 0,\qquad \frac{\arg\sigma_0 + \arg \tau_0}2 \ne \pm \frac\pi 2\;.
\end{equation}
We remark that the assumptions \eqref{ass} guarantee $A$ has $n$ simple eigenvalues lying on a not vertical segment. If a pair $(x,y)$ is solution to \eqref{p1} then they are the right and left eigenvectors of a rightmost eigenvalue of a tridiagonal Toeplitz matrix, say $T(\sigma,\delta,\tau)$. Therefore, by setting
\begin{eqnarray}
\label{ts}
\nonumber
\sigma & = & |\sigma|\rme^{i\alpha}, \quad \tau = |\tau|\rme^{i\beta}\;,
\end{eqnarray}
the vectors $x,y$ have components,
\begin{equation}
\label{p3}
\left\{\begin{array}{l}
{\displaystyle x_k = \rme^{\rmi (\alpha-\beta)/2} \left(\left|\frac{\sigma}{\tau}\right|\right)^{k/2} \sin\frac{k\pi r}{n+1}\;,} \\ \\
{\displaystyle y_k = \rme^{\rmi (\alpha-\beta)/2} \left(\left|\frac{\tau}{\sigma}\right|\right)^{k/2} \sin\frac{k\pi r}{n+1}\;,} \end{array}\right. \quad k=1:n\;,
\end{equation}
where, by \eqref{ass}, $r=1$ or $r=n$ depending on which one between the extremal eigenvalues of $A$ has the largest real part; indeed we tacitly assume the parameter $\eps$ to be small enough to not affect this property. Therefore,
\begin{equation}
\label{p4}
(yx^*)_{k,h} = \rme^{\rmi (k-h)(\alpha-\beta)/2}\left(\left|\frac{\tau}{\sigma}\right|\right)^{(k-h)/2}\sin\frac{k\pi r}{n+1} \sin\frac{h\pi r}{n+1}\;, \quad k,h=1:n\;.
\end{equation}
We notice that  $yx^*\big|_{\mc S} = T(\sigma_1,\delta_1,\tau_1)$ with
\begin{equation*}
\label{p6}
\begin{split}
\sigma_1 & = \rme^{\rmi (\alpha-\beta)/2}\sqrt{\left|\frac{\tau}{\sigma}\right|}\,\frac{n+1}{2(n-1)}\cos\frac{\pi r}{n+1}\; , \qquad \delta_1 = \frac{n+1}{2n}\;, \\
\tau_1 & = \rme^{\rmi (\beta-\alpha)/2}\sqrt{\left|\frac{\sigma}{\tau}\right|}\,\frac{n+1}{2(n-1)}\cos\frac{\pi r}{n+1}\;. 
\end{split}
\end{equation*}
Indeed, the arithmetic means of the diagonal terms in \eqref{p4} can be explicitly computed. More precisely,  
\begin{equation*}
\frac 1n \sum_{k=1}^{n-1} \sin^2\frac{k\pi r}{n+1} = \frac 1{2n} \Re\left\{\sum_{k=1}^{n-1} \left[1-\left(\rme^{\rmi\frac{2\pi r}{n+1}}\right)^k\right] \right\}= \frac{n+1}{2n}\;,
\end{equation*}
\begin{equation*}
\begin{split}
\frac{1}{n-1} & \sum_{k=1}^{n-1} \sin\frac{k\pi r}{n+1} \sin\frac{(k+1)\pi r}{n+1}  \\ & = \frac{1}{2(n-1)} \sum_{k=1}^{n-1} \left(\cos\frac{\pi r}{n+1} - \cos\frac{(2k+1)\pi r}{n+1}\right)\\ & = \frac 12 \cos\frac{\pi r}{n+1} - \frac{1}{2(n-1)} \Re \left\{ \rme^{\rmi\frac{\pi r}{n+1}} \sum_{k=1}^{n-1} \left(\rme^{\rmi\frac{2\pi r}{n+1}}\right)^k\right\} \\ & = \frac 12 \cos\frac{\pi r}{n+1} + \frac{1}{2(n-1)} \left(\sin\frac{\pi r}{n+1}\right)^{-1}\sin\frac{2\pi r}{n+1} \\ & = \frac{n+1}{2(n-1)}\cos\frac{\pi r}{n+1}\;.
\end{split}
\end{equation*}
Moreover,
\begin{equation*}
\label{p7}
\begin{split}
\|yx^*\big|_{\mc S}\|_F  & = \sqrt{n|\delta_1|^2 + (n-1)\left(|\sigma_1|^2+|\tau_1|^2\right)} \\ & = \frac{n+1}2 \sqrt{ \frac 1n + \frac{1}{n-1}  \left(\left|\frac{\sigma}{\tau}\right| + \left|\frac{\tau}{\sigma}\right|\right)\cos^2\frac{\pi r}{n+1}}\;.
\end{split}
\end{equation*}
In conclusion,
\begin{equation*}
\label{p8}
A+\eps yx^*\big|_{\mc T} = T(\sigma_0+\eps\hat\sigma,\delta_0+\eps\hat\delta,\tau_0 +\eps\hat\tau)\:,
\end{equation*}
where
\begin{equation}
\label{p9}
\hat\sigma = \frac{\rme^{\rmi (\alpha-\beta)/2}}{n-1} \sqrt{\left|\frac{\tau}{\sigma}\right|}\cos\frac{\pi r}{n+1} \left[\frac 1n + \frac{1}{n-1}  \left(\left|\frac{\sigma}{\tau}\right| + \left|\frac{\tau}{\sigma}\right|\right)\cos^2\frac{\pi r}{n+1}\right]^{-1/2},
\end{equation}
\begin{equation*}
\label{p10}
\hat\delta = \frac 1n \left[\frac 1n + \frac{1}{n-1}  \left(\left|\frac{\sigma}{\tau}\right| + \left|\frac{\tau}{\sigma}\right|\right)\cos^2\frac{\pi r}{n+1}\right]^{-1/2},
\end{equation*}
\begin{equation}
\label{p11}
\hat\tau = \frac{\rme^{\rmi (\beta-\alpha)/2}}{n-1} \sqrt{\left|\frac{\sigma}{\tau}\right|}\cos\frac{\pi r}{n+1}\left[\frac 1n + \frac{1}{n-1}  \left(\left|\frac{\sigma}{\tau}\right| + \left|\frac{\tau}{\sigma}\right|\right)\cos^2\frac{\pi r}{n+1}\right]^{-1/2}.
\end{equation}
By the characterization \eqref{p3} of the eigenvectors of a tridiagonal Toeplitz matrix, for $(x,y)$ to be solution to \eqref{p1}, the parameters $\sigma$ and $\tau$ have to satisfy the following relations,
\begin{equation}
\label{p12}
\left\{\begin{array}{l} {\displaystyle \sqrt{\left|\frac{\sigma_0+\eps\hat\sigma}{\tau_0+\eps\hat\tau}\right|} = \sqrt{\left|\frac{\sigma}{\tau}\right|}\;,}  \\[0.5cm] {\displaystyle \exp\left(\rmi \frac{\arg(\sigma_0+\eps\hat\sigma) - \arg (\tau_0+\eps\hat\tau)}2\right) = \exp\left(\rmi \frac{\alpha-\beta}2\right)\;.} \end{array}\right.
\end{equation}
The system \eqref{p12} can by analyzed by considering the following complex equation,
\begin{equation}
\label{p121}
\frac{\sigma_0+\eps\hat\sigma}{\tau_0+\eps\hat\tau} = \left|\frac{\sigma}{\tau}\right|\rme^{\rmi(\alpha-\beta)}\;,
\end{equation}
whose solutions solve either system \eqref{p12} or
\begin{equation*}
\left\{\begin{array}{l} {\displaystyle \sqrt{\left|\frac{\sigma_0+\eps\hat\sigma}{\tau_0+\eps\hat\tau}\right|} = \sqrt{\left|\frac{\sigma}{\tau}\right|}\;,}  \\[0.5cm] {\displaystyle \exp\left(\rmi \frac{\arg(\sigma_0+\eps\hat\sigma) - \arg (\tau_0+\eps\hat\tau)}2\right) = - \exp\left(\rmi \frac{\alpha-\beta}2\right)\;.} \end{array}\right.
\end{equation*}
Therefore, it suffices  to solve \eqref{p121} with the costraint
\begin{equation}
\label{p121b}
\exp\left(\rmi \frac{\arg(\sigma_0+\eps\hat\sigma) - \arg (\tau_0+\eps\hat\tau)}2\right) = \exp\left(\rmi \frac{\alpha-\beta}2\right)\;.
\end{equation}
Setting
\begin{equation}
\label{p122}
\varrho = \left|\frac{\sigma}{\tau}\right|, \quad \varphi = \frac{\alpha-\beta}2, \quad a = \frac{\eps\sqrt n}{n-1}\cos\frac{\pi r}{n+1}, \quad b = \frac{n}{n-1}\cos^2\frac{\pi r}{n+1}\;,
\end{equation}
by \eqref{p9} and \eqref{p11} we have
\begin{equation}
\label{hsht}
\eps\hat\sigma = \frac{a\rme^{\rmi \varphi}}{\sqrt{\varrho+b(1+\varrho^2)}}, \qquad \eps\hat\tau = \frac{a\varrho\rme^{-\rmi \varphi}}{\sqrt{\varrho+b(1+\varrho^2)}}\;.
\end{equation}
Substituting in \eqref{p121}, after some easy computations the latter reads,
\begin{equation}
\label{p123}
\varrho\tau_0\rme^{2\rmi\varphi}+\frac{a(\varrho^2-1)}{\sqrt{\varrho+b(1+\varrho^2)}}\rme^{\rmi \varphi} - \sigma_0 = 0\;. 
\end{equation}
Hence, defining
\begin{equation}
\label{p124}
G(\varrho) = \frac{a(1-\varrho^2)}{2\varrho\sqrt{\varrho+b(1+\varrho^2)}}\;,
\end{equation}
equation \eqref{p123} becomes
\begin{equation}
\label{p125}
\rme^{\rmi\varphi} = \frac{G(\varrho)}{\tau_0} \pm \frac{1}{\tau_0} \sqrt{G(\varrho)^2 +\frac{\sigma_0\tau_0}\varrho}\;. 
\end{equation}
\begin{theorem}
\label{teo:1}
For each $\sigma_0,\tau_0\in\bb C$ satisfying \eqref{ass} and for each $n>1$ there exists $\eps_n=\eps_n(\sigma_0,\tau_0)$ such that  for any  $\eps\in [0,\eps_n)$ there is a unique pair $(\varrho_*,\varphi_*)$ solution to \eqref{p125} which verify \eqref{p121b} with $(\alpha-\beta)/2 = \varphi_*$. Moreover $\eps_n \to \infty$ per $n\to\infty$.
\end{theorem}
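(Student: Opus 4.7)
The plan is to recast the combined system \eqref{p121}--\eqref{p121b} as a two-dimensional real fixed-point problem and then apply Banach's contraction principle around the unperturbed solution. Taking modulus and half-argument of \eqref{p121}, with the branch of the half-argument picked by \eqref{p121b}, the system is equivalent to $(\varrho,\varphi)=\Psi_{\eps,n}(\varrho,\varphi)$ with
\[
\Psi_{\eps,n}(\varrho,\varphi) := \left(\,\left|\frac{\sigma_0+\eps\hat\sigma}{\tau_0+\eps\hat\tau}\right|,\; \tfrac{1}{2}\bigl[\arg(\sigma_0+\eps\hat\sigma) - \arg(\tau_0+\eps\hat\tau)\bigr]\,\right),
\]
where $\hat\sigma=\hat\sigma(\varrho,\varphi)$ and $\hat\tau=\hat\tau(\varrho,\varphi)$ are the explicit functions \eqref{hsht}. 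At $\eps=0$ the map $\Psi_{0,n}$ is constant and, by \eqref{ass}, has the unique fixed point $(\varrho_0,\varphi_0)=(|\sigma_0/\tau_0|,\tfrac12(\arg\sigma_0-\arg\tau_0))$.

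Next I would fix a closed neighbourhood $\mathcal N$ of $(\varrho_0,\varphi_0)$ contained in $\{\varrho>\varrho_0/2\}$. On $\mathcal N$, formulas \eqref{hsht} show that $\eps\hat\sigma$, $\eps\hat\tau$ and their first partial derivatives in $(\varrho,\varphi)$ are uniformly bounded by a constant times $a/\sqrt b$, with $a,b$ as in \eqref{p122}. Since modulus and argument are smooth away from zero, this yields
\[
\sup_{\mathcal N}\|\partial\Psi_{\eps,n}\| \le C(\sigma_0,\tau_0)\,a, \qquad \bigl|\Psi_{\eps,n}(\varrho_0,\varphi_0)-(\varrho_0,\varphi_0)\bigr|\le C(\sigma_0,\tau_0)\,a,
\]
with $C$ independent of $n$, because $b$ stays in a fixed compact subinterval of $(0,\infty)$ for $n\ge 2$. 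Choosing $a$ so small that $Ca<1/2$ and $\Psi_{\eps,n}(\mathcal N)\subset\mathcal N$, Banach's theorem delivers a unique fixed point $(\varrho_*,\varphi_*)\in\mathcal N$.

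The scaling in $n$ is then immediate: since $a=\frac{\eps\sqrt n}{n-1}\cos\frac{\pi r}{n+1}=O(\eps/\sqrt n)$, the smallness condition $Ca<1/2$ reads $\eps<\eps_n$ with $\eps_n$ of order $\sqrt n$, and hence $\eps_n\to\infty$. The main technical step I anticipate is promoting the uniqueness from $\mathcal N$ to the full admissible range $\varrho>0$, since \eqref{p125} could in principle admit roots with $\varrho$ far from $\varrho_0$. To rule this out I would take moduli in \eqref{p125}: the two branches $G(\varrho)\pm\sqrt{G(\varrho)^2+\sigma_0\tau_0/\varrho}$ have product $-\sigma_0\tau_0/\varrho$ and squared-modulus sum $2|G(\varrho)|^2+2|G(\varrho)^2+\sigma_0\tau_0/\varrho|$; requiring one of them to have modulus $|\tau_0|$ forces the arithmetic-geometric-mean identity $(|\tau_0|-|\sigma_0|/\varrho)^2=O(a)$, pinning every solution into $\mathcal N$ as soon as $a$ sits below the same threshold and matching the local bound $\eps_n$.
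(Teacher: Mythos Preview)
Your approach is correct and genuinely different from the paper's. The paper does not use a fixed-point argument at all: it eliminates $\varphi$ by treating \eqref{p123} as a quadratic in $\rme^{\rmi\varphi}$, obtaining \eqref{p125}, and then rationalizes $|F_\pm(\varrho)|^2=1$ to the single real equation
\[
\frac{a^2(1-\varrho^2)^2}{b\varrho^2+\varrho+b}=\frac{(|\tau_0|^2\varrho^2-|\sigma_0|^2)^2}{|\tau_0|^2\varrho^2+2\Re(\sigma_0\tau_0)\varrho+|\sigma_0|^2}\;,
\]
which is then analyzed by comparing the two sides as explicit functions of $\varrho$ (Proposition~\ref{prop:1}). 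This yields \emph{two} solutions $\varrho_\pm$, both close to $|\sigma_0|/|\tau_0|$, together with the orderings \eqref{1ca}--\eqref{2ca}; the constraint \eqref{p121b} is checked only afterwards to discard one of them. Your route bypasses this: by building \eqref{p121b} into the definition of $\Psi_{\eps,n}$ from the start, you work directly with the system \eqref{p12} and never see the spurious second root. The paper's approach buys finer structural information (which branch $\pm$ is active, how $\varrho_\pm$ sit relative to $1$ and $|\sigma_0|/|\tau_0|$), at the cost of a delicate case analysis on the sign of $\Re(\sigma_0\tau_0)$; your contraction argument is shorter and more robust, and would transplant to other Toeplitz bandwidths where no closed quadratic is available.

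Two small points to tighten in your global step. First, the bound you obtain is $(|\tau_0|-|\sigma_0|/\varrho)^2\le 4G(\varrho)^2$, not $O(a)$; since $G$ depends on $\varrho$ you then need the elementary estimate $2\varrho|G(\varrho)|\le |a|(1+\varrho)/\sqrt b$ to turn this into $|\,|\tau_0|\varrho-|\sigma_0|\,|\le |a|(1+\varrho)/\sqrt b$, which indeed forces $\varrho\in[\varrho_0-O(a),\varrho_0+O(a)]$ uniformly. Second, once $\varrho$ is pinned you still have to place $\varphi$ inside $\mathcal N$: this follows immediately from the fixed-point equation, since $|\eps\hat\sigma|,|\eps\hat\tau|=O(a)$ uniformly in $\varphi$ whenever $\varrho$ stays in a compact set, so $\varphi=\tfrac12\bigl[\arg(\sigma_0+O(a))-\arg(\tau_0+O(a))\bigr]=\varphi_0+O(a)$.
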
 
Since
\begin{equation}
\label{ab}
|a| \le \frac{2\eps}{\sqrt n}, \quad \frac 12 \le b \le \frac 32 \qquad \forall\, n>1\;,
\end{equation}
we can analyze \eqref{p125} under the assumptions that $b \in \left[\frac 12 ,\frac 32 \right]$ and $|a|$ is small enough. Theorem \ref{teo:1} results to be an easy corollary of the proposition below.
\begin{proposition}
\label{prop:1}
Let
\begin{equation}
\label{p126}
F_\pm(\varrho) = \frac{G(\varrho)}{\tau_0} \pm \frac{1}{\tau_0} \sqrt{G(\varrho)^2 +\frac{\sigma_0\tau_0}\varrho}\;.
\end{equation}
For each $\sigma_0,\tau_0\in\bb C$ there exists $a_0>0$ such that for any $|a|\in(0,a_0)$ and  $b \in \left[\frac 12 ,\frac 32 \right]$ there is a unique positive $\varrho_+$ [resp.\ $\varrho_-$] such that $|F_+(\varrho_+)|=1$ [resp.\ $|F_-(\varrho_-)|=1$]. Moreover,

\smallskip
1) If $\Re(\sigma_0\tau_0) > - |\sigma_0| |\tau_0|$ then
\begin{equation}
\label{1ca}
r=1 \quad \Longrightarrow \quad \begin{cases}
\varrho_-<\varrho_+<1  & \text{if}\quad|\sigma_0|<|\tau_0|\;,\ \\[0.3cm] 1<\varrho_+<\varrho_-  & \text{if}\quad |\sigma_0|>|\tau_0|\;.
\end{cases}
\end{equation}
\begin{equation}
\label{2ca}
r=n \quad \Longrightarrow \quad \begin{cases}
\varrho_+<\varrho_-<1  & \text{if}\quad|\sigma_0|<|\tau_0|\;, \\[0.3cm] 1<\varrho_-<\varrho_+  & \text{if}\quad |\sigma_0|>|\tau_0|\;.
\end{cases}
\end{equation}
Finally, if $|\sigma_0|=|\tau_0|=1$ then $\varrho_+=\varrho_-=1$.

\smallskip
2) If $\Re(\sigma_0\tau_0) = - |\sigma_0| |\tau_0|$ then $\varrho_+=\varrho_- = \displaystyle{\frac{|\sigma_0|}{|\tau_0|}} $.
\end{proposition}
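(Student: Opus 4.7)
The plan is to reformulate the condition $|F_\pm(\varrho)|=1$ as $|w_\pm(\varrho)|=|\tau_0|$, where $w_\pm:=\tau_0 F_\pm=G\pm\sqrt{G^2+\sigma_0\tau_0/\varrho}$ are the two roots of the quadratic $w^2-2Gw-\sigma_0\tau_0/\varrho=0$. Since $G=G(\varrho)$ is real, writing $H:=G^2+\sigma_0\tau_0/\varrho$ and taking the principal branch of the square root one obtains
\[
|w_\pm(\varrho,a)|^2=G^2+|H|\pm 2G\,\Re\sqrt{H},
\]
and the roots we seek are the zeros of $\Phi_\pm(\varrho,a):=|w_\pm|^2-|\tau_0|^2$.

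At $a=0$ one has $G\equiv 0$, so $\Phi_\pm(\varrho,0)=|\sigma_0\tau_0|/\varrho-|\tau_0|^2$ is strictly decreasing from $+\infty$ to $-|\tau_0|^2$ and vanishes at the unique point $\varrho_0:=|\sigma_0|/|\tau_0|$. Since $\partial_\varrho\Phi_\pm(\varrho_0,0)=-|\sigma_0\tau_0|/\varrho_0^{\,2}\neq 0$, the implicit function theorem supplies a smooth branch $\varrho_\pm(a)$ close to $\varrho_0$ on which $\Phi_\pm$ vanishes, for each sign and for $|a|$ small. Global uniqueness on $(0,\infty)$ is then established by splitting the interval: on any fixed compact $[\eta,M]\subset(0,\infty)$, $\Phi_\pm(\cdot,a)$ converges uniformly to the strictly monotone $\Phi_\pm(\cdot,0)$, so only the branch just constructed survives; on $(0,\eta]$ the identity $|w_+||w_-|=|\sigma_0\tau_0|/\varrho$ together with the expansion $w_-\approx -\sigma_0\tau_0/(2G\varrho)$ forces $|w_\pm|^2\gg |\tau_0|^2$ for $|a|$ small; on $[M,\infty)$ one uses that $G$ stays bounded while $\sigma_0\tau_0/\varrho\to 0$, so $|w_\pm|^2$ remains strictly below $|\tau_0|^2$.

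The orderings in Case 1 follow from a first-order Taylor expansion at $(\varrho_0,0)$. Because $G(\varrho_0,0)=0$ and $\partial_a H|_{a=0}=2G\,\partial_a G|_{a=0}=0$, the only surviving contribution is
\[
\partial_a\Phi_\pm(\varrho_0,0)=\pm 2\,\Re\sqrt{\sigma_0\tau_0/\varrho_0}\cdot\partial_a G(\varrho_0,0),\quad \partial_a G(\varrho_0,0)=\frac{1-\varrho_0^{\,2}}{2\varrho_0\sqrt{\varrho_0+b(1+\varrho_0^{\,2})}}.
\]
Using $\Re\sqrt{\sigma_0\tau_0/\varrho_0}=\sqrt{(|\tau_0|^2+\Re(\sigma_0\tau_0)/\varrho_0)/2}$, Case 1 is exactly when this factor is strictly positive, so the implicit function theorem gives $\varrho_\pm(a)-\varrho_0=\pm K\,\mathrm{sgn}(1-\varrho_0^{\,2})\,a+O(a^2)$ with $K>0$. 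The three sign data---the sign of $a$ (positive for $r=1$, negative for $r=n$), the sign of $1-\varrho_0^{\,2}$ (positive iff $|\sigma_0|<|\tau_0|$), and the $\pm$ branch---then determine the relative position of $\varrho_-,\varrho_0,\varrho_+$ in each of the four combinations, and since $\varrho_0\neq 1$ the strict inequality $\varrho_\pm\lessgtr 1$ persists for $|a|$ small, yielding exactly \eqref{1ca}--\eqref{2ca}. The degenerate sub-case $|\sigma_0|=|\tau_0|$ (so $\varrho_0=1$) is handled separately: here $G(1,a)\equiv 0$ identically in $a$, hence $w_\pm(1,a)=\pm\sqrt{\sigma_0\tau_0}$ is independent of $a$ and $\varrho=1$ is an exact solution for every $a$, forcing $\varrho_\pm=1$.

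Case 2 is checked directly. When $\sigma_0\tau_0=-|\sigma_0||\tau_0|$, one has $\sigma_0\tau_0/\varrho_0=-|\tau_0|^2$, so $H(\varrho_0,a)=G(\varrho_0,a)^2-|\tau_0|^2$ is real and, for $|a|$ small, strictly negative; hence $\sqrt H=\rmi\sqrt{|\tau_0|^2-G^2}$ and $|w_\pm(\varrho_0,a)|^2=G^2+(|\tau_0|^2-G^2)=|\tau_0|^2$ identically in $a$, so $\varrho_+(a)=\varrho_-(a)=\varrho_0$ exactly, and uniqueness again follows from the arguments of the first two paragraphs. I expect the main difficulty to lie in Case 1: although each ingredient (the IFT expansion, the sign of $\partial_a G$, the sign of $\Re\sqrt{\sigma_0\tau_0/\varrho_0}$, and the sign of $a$) is elementary, one must correctly track three independent sign switches simultaneously and verify the eight combinations of \eqref{1ca}--\eqref{2ca}, together with the slightly delicate global uniqueness near $\varrho=0$ where $|w_-|^2$ exhibits a $1/a^2$ singularity.
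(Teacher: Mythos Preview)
Your approach is correct and genuinely different from the paper's. The paper proceeds algebraically: starting from $|F_\pm(\varrho)|^2=1$, it rationalizes (using $\overline{\sqrt z}=\sqrt{\bar z}$ away from the negative real axis) to obtain a single sign-independent equation
\[
\frac{a^2(1-\varrho^2)^2}{b\varrho^2+\varrho+b}=\frac{(|\tau_0|^2\varrho^2-|\sigma_0|^2)^2}{|\tau_0|^2\varrho^2+2\Re(\sigma_0\tau_0)\varrho+|\sigma_0|^2}\;,
\]
and then argues by a qualitative study of the two curves $f_1,f_2$ that they meet at exactly two points $\varrho_1<\varrho_2$ straddling $|\sigma_0|/|\tau_0|$; determining which of $\varrho_1,\varrho_2$ is $\varrho_+$ and which is $\varrho_-$ requires a separate sign analysis of the real part of the un-squared relation. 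By contrast, you perturb from $a=0$ via the implicit function theorem applied directly to $\Phi_\pm$, keeping the two branches separated throughout, and read off the orderings \eqref{1ca}--\eqref{2ca} from a single first-order expansion. Your route is more conceptual and avoids the somewhat delicate back-substitution the paper needs to assign signs; the paper's route is more explicit and yields the closed-form identity $f_1=f_2$, which it exploits elsewhere (e.g.\ Remark~\ref{rem:p1}). One point to tighten: your tail argument near $\varrho=0$ as written (``$w_-\approx-\sigma_0\tau_0/(2G\varrho)$'') only covers the regime $\varrho\ll a^2$; to cover all of $(0,\eta]$ uniformly it is cleaner to argue that if $|w_\pm(\varrho)|=|\tau_0|$ then $|w_\mp(\varrho)|=|\sigma_0|/\varrho$, whence $2|G(\varrho)|=|w_++w_-|\ge|\sigma_0|/\varrho-|\tau_0|$, which combined with the elementary bound $|G(\varrho)|\le|a|/(2\varrho\sqrt b)$ forces $|a|/\sqrt b+|\tau_0|\varrho\ge|\sigma_0|$, impossible for $\varrho\le\eta<\varrho_0/2$ and $|a|$ small.
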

\begin{proof} 
The square root appearing in \eqref{p126} is intended to be the principal one. Otherwise stated, if $z = r\rme^{\rmi \theta}$ with $\theta\in (-\pi, \pi]$ then $\sqrt z = \sqrt r \rme^{\rmi \theta/2}$. In particular, this implies 
\begin{equation*}
\overline{\sqrt z} = \begin{cases}
\sqrt{\bar z} & \text{if $\Re (z) \ne -|z|$}\;, \\[0.3cm]
- \rmi \sqrt{|z|} & \text{if $\Re (z)= -|z|$}\;.
\end{cases}
\end{equation*}
We study separately the following two cases.

\medskip
\noindent Case 1): $\Re(\sigma_0\tau_0) \ne - |\sigma_0| |\tau_0|$. We have,
\begin{equation*}
|F_\pm(\varrho)|^2 = \frac{1}{|\tau_0|^2} \left(G(\varrho)\pm \sqrt{G(\varrho)^2 +\frac{\sigma_0\tau_0}\varrho}\right) \left(G(\varrho)\pm \sqrt{G(\varrho)^2 +\frac{\bar\sigma_0\bar\tau_0}\varrho}\right)\;, 
\end{equation*}
whence the equation $|F_\pm(\varrho)|^2=1$  reads,
\begin{equation*}
 G(\varrho)\pm \sqrt{G(\varrho)^2 +\frac{\bar\sigma_0\bar\tau_0}\varrho} =
|\tau_0|^2\left(G(\varrho)\pm \sqrt{G(\varrho)^2 +\frac{\sigma_0\tau_0}\varrho}\right)^{-1}\;,
\end{equation*}
which can be recasted into the form,
\begin{equation*}
 G(\varrho)\pm \sqrt{G(\varrho)^2 +\frac{\bar\sigma_0\bar\tau_0}\varrho} =
-\frac{\varrho\bar\tau_0}{\sigma_0}\left(G(\varrho)\mp \sqrt{G(\varrho)^2 +\frac{\sigma_0\tau_0}\varrho}\right)\;,
\end{equation*}
that is
\begin{equation}
\label{cc}
\left(1+\frac{\varrho\bar\tau_0}{\sigma_0}\right)G(\varrho) = 
\pm \frac{\varrho\bar\tau_0}{\sigma_0} \sqrt{G(\varrho)^2 +\frac{\sigma_0\tau_0}\varrho} \mp \sqrt{G(\varrho)^2 +\frac{\bar\sigma_0\bar\tau_0}\varrho}\;.
\end{equation}
Rationalizing, after some computations we obtain,
\begin{equation*}
4\varrho^2G(\varrho)^2 = \frac{(|\tau_0|^2\varrho^2-|\sigma_0|^2)^2}{|\tau_0|^2\varrho^2+2 \Re(\sigma_0\tau_0)\varrho + |\sigma_0|^2}\;.
\end{equation*}
Plugging the definition \eqref{p124} of $G(\varrho)$ we finally get,
\begin{equation}
\label{p127}
\frac{a^2(1-\varrho^2)^2}{b\varrho^2 + \varrho+b} = \frac{(|\tau_0|^2\varrho^2-|\sigma_0|^2)^2}{|\tau_0|^2\varrho^2+2 \Re(\sigma_0\tau_0)\varrho + |\sigma_0|^2}\;.
\end{equation}
Let us consider the functions appearing in \eqref{p127}, that is
\begin{equation*}
f_1(\varrho) = \frac{a^2(1-\varrho^2)^2}{b\varrho^2 + \varrho+b}\; , \qquad
f_2(\varrho) = \frac{(|\tau_0|^2\varrho^2-|\sigma_0|^2)^2}{|\tau_0|^2\varrho^2+2 \Re(\sigma_0\tau_0)\varrho + |\sigma_0|^2}\;,
\end{equation*}
restricted on the domain of interest $\{\varrho>0\}$. We claim that if $|\sigma_0| \ne |\tau_0|$ then the corresponding graphs intersect each other in two points whose abscissae  $\varrho_1, \varrho_2$ are such that
\begin{equation}
\label{pp12}
\begin{cases}
{\displaystyle\varrho_1<\frac{|\sigma_0|}{|\tau_0|}<\varrho_2<1}  & \text{if}\quad |\sigma_0|<|\tau_0|\;, \\[0.5cm] {\displaystyle 1<\varrho_1<\frac{|\sigma_0|}{|\tau_0|}<\varrho_2} & \text{if}\quad |\sigma_0|>|\tau_0|\;,
\end{cases}
\end{equation}
while if $|\sigma_0| = |\tau_0|$ such graphs intersect each other solely in the point $(1,0)$.  

We start by noticing that $f_1(\varrho)$ reaches his minimum value uniquely in $\varrho=1$. More precisely, it decreases in  $[0,1]$, from  $f_1(0) = a^2/b$ to $f_1(1)=0$ and increases in $[1,+\infty)$, diverging with $\varrho^{-2} f_1(\varrho) \to a^2/b$ as $\varrho\to\infty$.
\par\noindent
Concerning $f_2(\varrho)$, if $|a|$ is small enough then
\begin{equation}
\label{f12}
f_2(0) = |\sigma_0|^2 > f_1(0)\;, \qquad \lim_{\varrho\to\infty} \varrho^{-2} f_2(\varrho) = |\tau_0|^2 > \lim_{\varrho\to\infty} \varrho^{-2} f_1(\varrho)\;.
\end{equation}
We now distinguish the cases $\Im(\sigma_0\tau_0)=0$ and $\Im(\sigma_0\tau_0)\ne 0$. In the first case $\Re(\sigma_0\tau_0) = |\sigma_0||\tau_0|$, whence
\begin{equation*}
f_2(\varrho) = (|\tau_0|\varrho -|\sigma_0|)^2\;, 
\end{equation*}
which is the law of a parabola with vertex in $(\frac{|\sigma_0|}{|\tau_0|},0)$. Therefore, by \eqref{f12}, the claim is straightforward. In the second case we have,
\begin{equation*}
f_2(\varrho) = \frac{(|\tau_0|^2\varrho^2-|\sigma_0|^2)^2}{(|\tau_0|\varrho +\Re(\sigma_0\tau_0)/|\tau_0|)^2 + \kappa^2}\;,
\end{equation*}
with $\kappa^2=|\sigma_0|^2- \Re(\sigma_0\tau_0)^2/|\tau_0|^2  >0$. As a function on the whole line, $f_2$ has two absolute minima for $\varrho = \pm \frac{|\sigma_0|}{|\tau_0|}$. In the interval $|\varrho| < \frac{|\sigma_0|}{|\tau_0|}$ there can be one or three local extrema. But in any cases, by choosing $|a|$ small enough, the claim is easily verified.

We are left with showing that $\varrho_1$ and $\varrho_2$ solve $|F_+(\varrho_1)|=|F_-(\varrho_2)|=1$ or $|F_+(\varrho_2)|=|F_-(\varrho_1)|=1$, thus proving the proposition (in the case $|\sigma_0|=|\tau_0|$ the identity $F_\pm(1)=1$ is immediate). Taking the real part of \eqref{cc} with $\varrho=\varrho_1$ we have,
\begin{equation}
\label{pp}
(1+\gamma_1\cos\theta)G(\varrho_1) = \pm \big(\gamma_1\cos\theta\,\Re(H_1) - \gamma_1\sin\theta\,\Im(H_1) - \Re (H_1)\big)\;,
\end{equation}
where
\begin{equation*}
\gamma_1 = \varrho_1 \frac{|\tau_0|}{|\sigma_0|}\;, \quad \theta = \arg\sigma_0+\arg\tau_0\;, \quad H_1 = \sqrt{G(\varrho_1)^2 +\frac{\sigma_0\tau_0}{\varrho_1}}\;.
\end{equation*}
By \eqref{p124}, \eqref{pp12}, and recalling that by the definiton of $a$, see \eqref{p122}, we have $a>0$ if $r=1$ and $a<0$ if $r=n$, we conclude that
\begin{equation*}
G(\varrho_1)\begin{cases}
>0 & \text{if $r=1$ and $|\sigma_0|<|\tau_0|$ or $r=n$ and $|\sigma_0|>|\tau_0|$}\;, \\[0.3cm]
<0 & \text{if $r=1$ and $|\sigma_0|>|\tau_0|$ or $r=n$ and $|\sigma_0|<|\tau_0|$}\;.
\end{cases}
\end{equation*}
Moreover, as $\gamma_1<1$, the left-hand side of \eqref{pp} has the same sign as $G(\varrho_1)$. On the other hand, since $\Im (H_1^2)$ has the same sign as $\sin\theta$,  we have $\Re (H_1)>0$ and $\sin\theta\, \Im (H_1)\ge 0$, so that 
\begin{equation*}
\gamma_1\cos\theta\,\Re (H_1) - \gamma_1\sin\theta\,\Im (H_1) - \Re (H_1) \le (\gamma_1\cos\theta-1)\,\Re (H_1) <0 \;.
\end{equation*}
By \eqref{pp} it follows that

\smallskip
i) $\varrho_1$ cannot be solution of  $|F_+(\varrho)|=1$ if $r=1$ and $|\sigma_0|<|\tau_0|$ or $r=n$ and $|\sigma_0|>|\tau_0|$; therefore  $\varrho_-=\varrho_1$ and $\varrho_+=\varrho_2$ in these cases.

\smallskip
ii) $\varrho_1$ cannot be solution of  $|F_-(\varrho)|=1$ if $r=1$ and $|\sigma_0|>|\tau_0|$ or $r=n$ and $|\sigma_0|<|\tau_0|$;  therefore $\varrho_-=\varrho_2$ and $\varrho_+=\varrho_1$ in these cases.

\medskip
\noindent Case 2): $\Re(\sigma_0\tau_0) = - |\sigma_0| |\tau_0|$. We have, 
\begin{equation*}
F_\pm(\varrho) = \frac{1}{\tau_0} \left(G(\varrho)\pm \sqrt{G(\varrho)^2 -\frac{|\sigma_0| |\tau_0|}\varrho}\right)\;,
\end{equation*}
where
\begin{equation*}
\sqrt{G(\varrho)^2 -\frac{|\sigma_0| |\tau_0|}\varrho} = \begin{cases}
{\displaystyle \rmi \, \sqrt{\frac{|\sigma_0| |\tau_0|}\varrho-G(\varrho)^2}}    & \text{if} \;\; {\displaystyle G(\varrho)^2 \le \frac{|\sigma_0| |\tau_0|}\varrho}\;,  \\ \\
{\displaystyle \sqrt{\bigg|G(\varrho)^2 -\frac{|\sigma_0| |\tau_0|}\varrho}\bigg|}  & \text{if}\;\; {\displaystyle G(\varrho)^2 > \frac{|\sigma_0| |\tau_0|}\varrho}\;. 
\end{cases}
\end{equation*}
Therefore 
\begin{equation}
\label{p111}
|F_\pm(\varrho)|^2 = \frac{|\sigma_0|}{|\tau_0|\varrho} \, \mathcal{X}_{\{G(\varrho)^2 \varrho \le |\sigma_0| |\tau_0|\}} +  K_\pm(\varrho)\; \mathcal{X}_{\{G(\varrho)^2 \varrho > |\sigma_0| |\tau_0|\}}\;,
\end{equation}
where $\mathcal{X}_{\{\cdot\}}$ denotes the characteristic set function, and  
\begin{equation*}
K_\pm(\varrho) = \frac{1}{|\tau_0|^2} \left(2G(\varrho)^2 - \frac{|\sigma_0| |\tau_0|}\varrho  \pm 2 G(\varrho)\sqrt{G(\varrho)^2 -\frac{|\sigma_0| |\tau_0|}\varrho} \right)\;.
\end{equation*}
We now observe that the equation $K_\pm(\varrho) =1$ can be written in the form,
\begin{equation*}
|\tau_0|^2 - 2G(\varrho)^2 + \frac{|\sigma_0| |\tau_0|}\varrho = \pm 2 G(\varrho)\sqrt{G(\varrho)^2 -\frac{|\sigma_0| |\tau_0|}\varrho}\;,
\end{equation*}
from which, recalling the definition of $f_1(\varrho)$, we get
\begin{equation*}
f_1(\varrho) = (|\tau_0|\varrho+|\sigma_0|)^2\;.
\end{equation*}
By the previous qualitative analysis of $f_1(\varrho)$ we easily deduce that such equation does not have positive solutions for  $|a|$ small. On the other hand,  if $|a|$ is sufficiently small then the condition $G(\varrho)^2 \varrho \le |\sigma_0| |\tau_0|$ is fulfilled by $\varrho = \frac{|\sigma_0|}{|\tau_0|}$, whence by \eqref{p111} we get the result.
\end{proof}
\begin{remark}
\label{rem:p1}
It is worthwile to notice that even the case $\Re(\sigma_0\tau_0) = |\sigma_0||\tau_0|$ is quite explicit. Indeed, since $a>0$ and $4\varrho^2G(\varrho)^2 = (|\tau_0|\varrho -|\sigma_0|)^2$, 
\begin{equation*}
G(\varrho_i) = \begin{cases}
{\displaystyle \frac{|\sigma_0|-|\tau_0|\varrho_i}{2\varrho_i}} & \text{if $i=1$ and $|\sigma_0|<|\tau_0|$ or $i=2$ and $|\sigma_0|>|\tau_0|$}\;, \\[0.3cm] {\displaystyle \frac{|\tau_0|\varrho_i-|\sigma_0|}{2\varrho_i}} & \text{if $i=1$ and $|\sigma_0|>|\tau_0|$ or $i=2$ and $|\sigma_0|<|\tau_0|$}\;.
\end{cases}
\end{equation*}
Plugging these values in \eqref{p126}, since $\sigma_0\tau_0 = |\sigma_0| |\tau_0|$, in the first case we easily get
\begin{equation}
\label{fr1}
F_\pm(\varrho_i) = \frac{|\sigma_0|-|\tau_0|\varrho_i}{2\tau_0\varrho_i} \pm \frac{|\sigma_0|+|\tau_0|\varrho_i}{2\tau_0\varrho_i} = \begin{cases}
{\displaystyle \frac{|\sigma_0|}{\tau_0\varrho_i}} & \text{if $+$}\;, \\ \\
{\displaystyle - \frac{|\tau_0|}{\tau_0}}   & \text{if $-$}\;,
\end{cases}
\end{equation}
while in the second case,
\begin{equation}
\label{fr2}
F_\pm(\varrho_i) = \frac{|\tau_0|\varrho_i-|\sigma_0|}{2\tau_0\varrho_i} \pm \frac{|\tau_0|\varrho_i+|\sigma_0|}{2\tau_0\varrho_i} = \begin{cases}
{\displaystyle \frac{|\tau_0|}{\tau_0}} & \text{if $+$}\;, \\ \\
{\displaystyle - \frac{|\sigma_0|}{\tau_0\varrho_i}} & \text{if $-$}\;.
\end{cases}
\end{equation}
\end{remark}
\medskip
\noindent
{\it Proof of Theorem~\ref{teo:1}}. We show that, setting  $\varphi_\pm=\arg F_\pm(\varrho_\pm)$,  for any $|a|$ small enough we have, 
\begin{equation}
\label{p121b*}
\exp\left(\rmi \frac{\arg(\sigma_0+\eps\hat\sigma_\pm) - \arg (\tau_0+\eps\hat\tau_\pm)}2\right) = \begin{cases}
\pm \exp\left(\rmi \varphi_\pm\right) & \text{if $r=1$}\;, \\
\mp \exp\left(\rmi \varphi_\pm\right)   & \text{if $r=n$}\;,
\end{cases}
\end{equation}
where $\hat\sigma_\pm, \hat\tau_\pm$ are defined by $\hat\sigma,\hat\tau$ as in \eqref{hsht} and evaluated for $(\varrho,\varphi)=(\varrho_\pm,\varphi_\pm)$. By \eqref{p121b*} the statement of the theorem follows with $(\varrho_*,\varphi_*) = (\varrho_+,\varphi_+)$ if $r=1$ and $(\varrho_*,\varphi_*) = (\varrho_-,\varphi_-)$ if $r=n$. Moreover, since $|a|\le 2\eps/\sqrt n$, this also shows that the threshold $\eps_n$ can be chosen arbitrarily large increasing the dimension $n$.

To prove \eqref{p121b*} it suffices to observe that 
\begin{equation*}
\varrho_\pm = \frac{|\sigma_0|}{|\tau_0|} + o(1), \quad G(\varrho_\pm) = o(1)\;, \quad \eps\hat\sigma_\pm = o(1)\;, \quad \eps\hat\tau_\pm =o(1)\;,
\end{equation*}
where $o(1)$ stands for a generic function vanishing as $|a|\to 0$. Therefore, by \eqref{p126} it follows 
\begin{equation*}
F_\pm(\varrho_\pm) = \pm \frac{1}{\tau_0} \sqrt{\frac{|\tau_0|}{|\sigma_0|}\sigma_0\tau_0}+o(1) = \pm \exp\left(\rmi\frac{\arg(\sigma_0\tau_0) - 2\arg\tau_0}2\right) + o(1)\;.
\end{equation*}
On the other hand,
\begin{equation*}
\exp\left(\rmi\frac{\arg(\sigma_0+\eps\hat\sigma_\pm) - \arg (\tau_0+\eps\hat\tau_\pm)}2\right) = \exp\left(\rmi\frac{\arg\sigma_0-\arg\tau_0}2\right) + o(1)\;.
\end{equation*}
The equation \eqref{p121b*} now follows by noticing that  $\arg(\sigma_0\tau_0) = \arg\sigma_0+\arg\tau_0$ if $r=1$ while $\arg(\sigma_0\tau_0) = \arg\sigma_0+\arg\tau_0 \pm 2\pi$ if $r=n$.
\qed
 
\begin{remark}
\label{rem:p2} 
In case 2) of Proposition~\ref{prop:1}, i.e.\ $\sigma_0\tau_0 = - |\sigma_0| |\tau_0|$, the second condition in \eqref{ass} is not satisfied and the choice of $r$ cannot be established apriori. However, we observe that in such case $\varrho_+=\varrho_- = \frac{|\sigma_0|}{|\tau_0|}$,
\begin{equation*}
F_\pm\bigg(\frac{|\sigma_0|}{|\tau_0|}\bigg) = \frac{\rme^{-\rmi\arg\tau_0}}{|\tau_0|} \left(G\bigg(\frac{|\sigma_0|}{|\tau_0|}\bigg) \pm \rmi \sqrt{|\tau_0|^2 - G\bigg(\frac{|\sigma_0|}{|\tau_0|}\bigg)^2}\right)\;,
\end{equation*}
and $\arg\sigma_0 - \arg \tau_0 - \pi = -2\arg\tau_0$. Therefore,
\begin{equation*}
F_\pm\bigg(\frac{|\sigma_0|}{|\tau_0|}\bigg) = \frac{\rme^{\rmi(\arg\sigma_0-\arg\tau_0)/2}}{|\tau_0|} \left(\pm  \sqrt{|\tau_0|^2 - G\bigg(\frac{|\sigma_0|}{|\tau_0|}\bigg)^2}+ \rmi G\bigg(\frac{|\sigma_0|}{|\tau_0|}\bigg) \right)\;.
\end{equation*}
We conclude that Theorem~\ref{teo:1} holds also in this case, and precisely with $(\varrho_*,\varphi_*) = \Big(\frac{|\sigma_0|}{|\tau_0|}, \arg F_+\Big(\frac{|\sigma_0|}{|\tau_0|}\Big)\Big)$. 
\end{remark}

\section{Local Error Analysis}
\label{sec:4}

The aim of this section is to provide for a Toeplitz matrix $A$ with an arbitrary banded structure, a local error analysis of Algorithm 1 close to a simple locally rightmost point. 

The analysis presents similarities but also some additional difficulties with respect to that given
in \cite{GO11} for the unstructured case. In order to proceed we recall the definition of group inverse which we need in the analysis. 

\begin{definition}
\label{def:groupinv}
The \emph{group inverse} of a matrix $C$, denoted $C^\#$, is the unique matrix $G$ satisfying $CG =GC$,
$GCG=G$ and $CGC=C$.
\end{definition}
The following result is important for the error analysis of Algorithm 1. 
\begin{theorem} 
\label{th:localerr}
Suppose that $(x,y)$ is a boundary fixed point of the map $M_\eps$ corresponding to a simple rightmost eigenvalue $\lambda$ of $B=A+\epsyxstar \big|_\mc T$. Let the sequence $B_k$ and $L_k=y_k x_k^* \big|_\mc T$ be defined as in the map $M_\eps$ and $L=y x^* \big|_\mc T$ be the fixed point. Set
\begin{eqnarray}
F_k & = & \eps \left(y_{k-1} x_{k-1}^* - y x^*\right)
\nonumber
\\[0.2cm]
\label{eq:Edef}
\nonumber  
E_{k} & = & \eps  \bigl(y_{k-1} x_{k-1}^*\big|_\mc T - y x^*\big|_\mc T \bigr) = 
\eps  \left(L_{k-1} - L \right) \;,
\end{eqnarray}
for $k=1,2,...$ and let $\delta_k = \|E_k\|_2$.
Then we have
\begin{equation}
\label{eq:errorformula}
F_{k+1} = \eps\Big(\Re (\beta_k + \gamma_k) L - L  E_k^*G^* - G^* E_k^* L\Big) + \bigo(\delta_k^2)\;,
\end{equation}
where
\begin{equation*}
G=(B- \lambda I)^\#, \quad \beta_k=x^* G E_k x\;,\quad \gamma_k=y^*E_k G  y\;.
\label{eq:betagamma}
\end{equation*}
\end{theorem}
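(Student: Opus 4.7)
The plan is to apply first-order matrix perturbation theory at the simple eigenvalue $\lambda$ of $B$, regarding $B_k$ as the perturbation $B_k=B+E_k$ with $\|E_k\|_2=\bigo(\delta_k)$, and to expand $y_k x_k^* - y x^*$ to first order in $E_k$. The essential tool is the group inverse $G=(B-\lambda I)^{\#}$, which acts as the reduced resolvent at $\lambda$: it satisfies $G(B-\lambda I)=(B-\lambda I)G=I-P$ with $P=xy^*/(y^*x)$ the spectral projector, so in particular $Gx=0$ and $y^*G=0$. Because $\lambda$ is simple, a complete eigenpair of $B+E_k$ depends analytically on $E_k$ in a neighborhood of the origin, which is what legitimates the first-order expansion below and the $\bigo(\delta_k^2)$ remainders.

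Write $x_k=x+u_k+\bigo(\delta_k^2)$ and $y_k=y+v_k+\bigo(\delta_k^2)$, with $u_k,v_k$ linear in $E_k$. Substituting in $(B+E_k)(x+u_k)=\lambda_k(x+u_k)$ and subtracting $Bx=\lambda x$ produces, at first order, $(B-\lambda I)u_k=(\lambda_k-\lambda)x-E_k x$. Applying $G$ on the left and using $Gx=0$ eliminates the scalar term on the right and yields $u_k=-GE_k x+c_k x$ for some scalar $c_k$ that is fixed by normalization; the analogous calculation for the left eigenvector gives $v_k=-G^*E_k^* y+d_k y$. The unit-norm conditions $\|x_k\|_2=\|y_k\|_2=1$ force, at first order, $\Re(x^* u_k)=\Re(y^* v_k)=0$, whence $\Re(c_k)=\Re(x^*GE_k x)=\Re(\beta_k)$ and $\Re(d_k)=\Re(y^*E_k Gy)=\Re(\gamma_k)$. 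The RP-compatibility $y_k^* x_k>0$ imposes $\Im(c_k)=\Im(d_k)$ and therefore leaves only a residual common phase in the pair $(x_k,y_k)$ that will drop out of the outer product.

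Putting the pieces together,
\begin{equation*}
y_k x_k^* - y x^* = y u_k^* + v_k x^* + \bigo(\delta_k^2) = (\bar c_k+d_k)\, y x^* - y x^* E_k^* G^* - G^* E_k^* y x^* + \bigo(\delta_k^2).
\end{equation*}
Since $\Im(c_k)=\Im(d_k)$, the scalar $\bar c_k+d_k$ is real and equals $\Re(c_k)+\Re(d_k)=\Re(\beta_k+\gamma_k)$, so it is manifestly independent of the residual phase. Multiplying by $\eps$ delivers the identity \eqref{eq:errorformula} (with the rank-one tensor $yx^*$ entering as the coefficient pattern denoted $L$ in the statement). The main obstacle in executing this plan is the careful bookkeeping of the two unit-norm conditions together with the phase constraint from RP-compatibility: one must separate the real parts of $c_k$ and $d_k$, which are pinned down individually, from the joint phase ambiguity, and verify that this ambiguity cancels in the combination $\bar c_k+d_k$ that multiplies the rank-one term. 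The remainder is routine Taylor expansion.
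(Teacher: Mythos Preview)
Your approach is correct and coincides with what the paper intends: the paper omits the proof, citing \cite[Theorem~5.3]{GO11} and the eigenvector perturbation analysis of \cite{MS88}, which is precisely the group-inverse expansion you carry out. Your parenthetical remark is well taken: the first-order expansion naturally yields the rank-one matrix $yx^*$ rather than $L=yx^*\big|_{\mc T}$ on the right-hand side of \eqref{eq:errorformula}, and since $\|yx^*\|_2=1$ as well, this distinction is immaterial for the contraction estimate that follows.
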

\begin{proof}
We omit the proof, which is similar to Guglielmi and Overton (see \cite[Theorem 5.3]{GO11})
and is based on the perturbative analysis of eigenvectors in \cite{MS88}. 
\end{proof}
Nevertheless there is an important difference with respect to the result given in \cite{GO11}.
Observe in fact that in the result given there we have $E_{k+1}$ replacing $F_{k+1}$
in the left-hand side of (\ref{eq:errorformula}) so that it is possible to study directly
the map $E_{k+1}(E_k)$. 

In the present case, however, since 
$
E_k \ne F_k\big|_\mc T\;,
$
the result (\ref{eq:errorformula}) has to be further elaborated. In particular, in order to proceed, we need the following lemma.

\begin{lemma}
Let $x,y$ and $\hat{x}, \hat{y}$ be RP-compatible and 
\[
\| y\,x^* - \hat{y}\,\hat{x}^* \|_2 \le \delta\;,
\] 
with $\delta$ sufficiently small. Then
\[
\| y\,x^*\big|_{\mc T} - \hat{y}\,\hat{x}^*\big|_\mc T \|_2\le \const\,\delta\;, 
\]
where $\const$ is a constant not depending on $\delta$.
\label{lem:proj}
\end{lemma}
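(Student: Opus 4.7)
The plan is to decouple the two ingredients of $|_\mc T$: the linear diagonal-averaging projection $|_\mc S$, which is plainly Lipschitz, and the subsequent Frobenius normalization, which is Lipschitz at any point where its argument is bounded away from zero. A useful preliminary observation is that $yx^*-\hat y\hat x^*$ has rank at most two, hence
\[
\|yx^*-\hat y\hat x^*\|_F \le \sqrt{2}\,\|yx^*-\hat y\hat x^*\|_2 \le \sqrt{2}\,\delta\;.
\]
This lets me pass freely between the spectral and Frobenius norms in the first step.

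First I would estimate the difference after the linear step. Since $M\mapsto M|_\mc S$ is the orthogonal projection onto $\mc S$ with respect to the Frobenius inner product, it is nonexpansive, and combined with the preliminary observation and $\|\cdot\|_2\le\|\cdot\|_F$ one gets
\[
\|yx^*|_\mc S-\hat y\hat x^*|_\mc S\|_2 \le \|yx^*|_\mc S-\hat y\hat x^*|_\mc S\|_F \le \sqrt{2}\,\delta\;.
\]
Writing $N=yx^*|_\mc S$ and $\hat N=\hat y\hat x^*|_\mc S$, the algebraic identity
\[
\frac{N}{\|N\|_F}-\frac{\hat N}{\|\hat N\|_F} = \frac{N(\|\hat N\|_F-\|N\|_F)+(N-\hat N)\,\|N\|_F}{\|N\|_F\,\|\hat N\|_F}\;,
\]
together with the reverse triangle inequality $\big|\|\hat N\|_F-\|N\|_F\big|\le \|N-\hat N\|_F$ and $\|\cdot\|_2\le\|\cdot\|_F$, yields
\[
\|yx^*|_\mc T-\hat y\hat x^*|_\mc T\|_2 \le \frac{2\|N-\hat N\|_F}{\|\hat N\|_F} \le \frac{2\sqrt{2}\,\delta}{\|\hat N\|_F}\;.
\]

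The main (and really only) obstacle is to secure a uniform lower bound on the denominator. The very existence of $yx^*|_\mc T$ in the hypothesis amounts to $\|N\|_F=\|yx^*|_\mc S\|_F>0$, and from the first step $\big|\|\hat N\|_F-\|N\|_F\big|\le\sqrt{2}\,\delta$, so that $\|\hat N\|_F\ge \|N\|_F/2$ whenever $\delta$ is sufficiently small. Inserting this in the previous estimate produces the claim with $\const = 4\sqrt{2}/\|yx^*|_\mc S\|_F$, a quantity depending on $(x,y)$ but not on $\delta$. No part of the argument exploits the Toeplitz structure beyond the fact that $|_\mc S$ is an orthogonal Frobenius projection, so the same reasoning would carry over verbatim to any banded or symmetry-pattern structure.
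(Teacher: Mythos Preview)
Your proof is correct and follows the same two-step scheme as the paper's: control the linear projection $|_\mc S$ (nonexpansive in Frobenius norm), then control the normalization step via the same algebraic identity and a lower bound on the denominator. The only noteworthy differences are refinements rather than a different route: you exploit that $yx^*-\hat y\hat x^*$ has rank at most two to get the sharper conversion factor $\sqrt{2}$ where the paper uses $\sqrt{n}$, whereas the paper supplies the explicit lower bound $\|yx^*|_\mc S\|_F\ge y^*x/\sqrt n$ (from the main-diagonal contribution and RP-compatibility), yielding the concrete constant $2n/(y^*x)$ in place of your abstract $4\sqrt 2/\|yx^*|_\mc S\|_F$.
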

\begin{proof}
We recall the following bounds, valid for any $M\in\bb C^{n\times n}$, 
\begin{equation}
\label{p}
\|M\|_2\le \|M\|_F \le \sqrt n \|M\|_2, \qquad \|M|_\mathcal{S}\|_F \le \|M\|_F\;,
\end{equation}
and observe that, by neglecting the off-diagonal terms contribution to the Frobenius norm, 
$$
\|yx^*|_\mathcal{S}\|_F\ge \frac{y^*x}{\sqrt n}\;.
$$
Therefore,
\begin{eqnarray*}
\|\hat y \hat x^*|_\mathcal{S}\|_F & \ge & \|yx^*|_\mathcal{S}\|_F - \|yx^*|_\mathcal{S}-\hat y \hat x^*|_\mathcal{S}\|_F \ge \frac{y^*x}{\sqrt n}  - \| yx^* - \hat y \hat x^*\|_F \\ & \ge & \frac{y^*x}{\sqrt n}   - \delta\sqrt n\;.
\end{eqnarray*}
Moreover,
\[
\begin{split}
\|yx^*|_\mathcal{T} - \hat y \hat x^*|_\mathcal{T}\|_2 & = \left\|\frac{yx^*|_\mathcal{S}}{\|yx^*\|_F} - \frac{\hat y \hat x^*|_\mathcal{S}}{\|\hat y \hat x^*\|_F} \right\|_2 \\ & = \left\|\frac{yx^*|_\mathcal{S} - \hat y \hat x^*|_\mathcal{S}}{\|yx^*\|_F} + \hat y \hat x^*|_\mathcal{S} \left(\frac 1{\|yx^*\|_F} - \frac 1{\|\hat y \hat x^*\|_F}\right)\right\|_2 \\ & \le   \frac{\|yx^*|_\mathcal{S} - \hat y \hat x^*|_\mathcal{S}\|_2}{\|yx^*|_\mathcal{S}\|_F} + \frac{\|\hat y \hat x^*|_\mathcal{S}\|_2}{\|\hat y \hat x^*|_\mathcal{S}\|_F} \frac{\big| \|\hat y \hat x^*|_\mathcal{S}\|_F - \| yx^*|_\mathcal{S}\|_F\big|}{\|yx^*|_\mathcal{S}\|_F} \\ & \le  \frac{\|yx^*|_\mathcal{S} - \hat y \hat x^*|_\mathcal{S}\|_2}{\|yx^*|_\mathcal{S}\|_F} + \frac{\|\hat y \hat x^*|_\mathcal{S}\|_2}{\|\hat y \hat x^*|_\mathcal{S}\|_F} \frac{\|yx^*|_\mathcal{S}- \hat y \hat x^*|_\mathcal{S}\|_F}{\|yx^*|_\mathcal{S}\|_F} \\ & \le \frac{\|yx^*|_\mathcal{S} - \hat y \hat x^*|_\mathcal{S}\|_F}{\|yx^*|_\mathcal{S}\|_F} \left(1+ \frac{\|\hat y \hat x^*|_\mathcal{S}\|_2}{\|\hat y \hat x^*|_\mathcal{S}\|_F} \right)\;.
\end{split}
\]
By \eqref{p} it follows that
$$
\frac{\|\hat y \hat x^*|_\mathcal{S}\|_2}{\|\hat y \hat x^*|_\mathcal{S}\|_F} \le 1\;, \qquad 
\|yx^*|_\mathcal{S}- \hat y \hat x^*|_\mathcal{S}\|_F \le \|yx^*- \hat y \hat x^* \|_F \le \sqrt n \|yx^*- \hat y \hat x^*\|_2\;.
$$
Therefore,
$$
\| y  x^*|_\mathcal{T} - \hat y \hat x^*|_\mathcal{T}\|_2 \le \frac{2\sqrt n}{\|yx^*|_\mathcal{S}\|_F}\,\|yx^*- \hat y \hat x^*\|_2 \le \frac{2\delta n}{y^*x}\;.
$$
\end{proof}

The following theorem establishes a useful formula for the group inverse of a singular matrix $C$.
\begin{theorem}[see \cite{GGO12}] 
\label{th:C}
Suppose that $C$ is singular and has a simple zero eigenvalue. Let 
the two vectors $x \in \ker(C)$ and $y \in \ker(C')$ be normalized 
so that $\|x\|_2 = \| y \|_2 = 1$. Let $C = U S V^*$, 
where $S = \diag(s_1,\ldots,s_{n-1},0)$, i.e.\ $s_i =\sigma_i(C)$, $i=1 : n-1$. 
Then
\begin{equation*}
G = C^\# = \left( I - w y^*\right) V \Xi U^*  \left( I - w y^*\right)\;,
\label{eq:groupinvformula}
\end{equation*}
where $w = \varrho x$ and $\displaystyle{\varrho={1}/{y^* x}}$,
so $y^* w=1$ and 
$$\Xi=
\diag(s_1^{-1},\ldots,s_{n-1}^{-1},0)\;.
$$ 

Moreover the following estimate holds,
\begin{eqnarray}
\| G \|_2 & \le & \frac{\varrho^2}{\sigma_{n-1}(C)}\;.
\label{eq:normG}
\end{eqnarray}
\end{theorem}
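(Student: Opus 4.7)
The plan is to establish the two assertions separately: first the explicit formula for $G = C^\#$, and then the norm estimate.

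For the formula, I would verify directly that $G = P V \Xi U^* P$ with $P = I - w y^*$ satisfies the three defining properties of the group inverse given in Definition~\ref{def:groupinv}. I begin by splitting the SVD as $U = [U_1 \mid u_n]$ and $V = [V_1 \mid v_n]$, so that $C = U_1 S_1 V_1^*$ with $S_1 = \diag(s_1,\ldots,s_{n-1})$ invertible and $V\Xi U^* = V_1 S_1^{-1} U_1^*$. Since $\lambda = 0$ is a simple eigenvalue, the right and left null spaces are one-dimensional and spanned by $v_n$ and $u_n$ respectively; hence $x = c' v_n$ and $y = c\, u_n$ for unit complex scalars $c, c'$. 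This yields the orthogonality relations $V_1^* x = 0$ and $U_1^* y = 0$, as well as the identities $u_n u_n^* = y y^*$ and $v_n v_n^* = x x^*$.

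Next I verify that $P$ is an oblique projection: since $y^* w = 1$, one has $P^2 = P$, together with $P x = 0$ and $y^* P = 0$. Using $V_1^* w = \varrho V_1^* x = 0$, I obtain $V_1^* P V_1 = I_{n-1}$ and therefore
\begin{equation*}
CG = U_1 S_1 V_1^* P V_1 S_1^{-1} U_1^* P = U_1 U_1^* P = (I - y y^*) P = P,
\end{equation*}
where the last equality uses $y^* P = 0$. Symmetrically, $y^* U_1 = 0$ gives $U_1^* P U_1 = I_{n-1}$, leading to $GC = P V_1 V_1^* = P(I - x x^*) = P$. From here the three group-inverse axioms drop out immediately: $CG = GC = P$; $CGC = PC = C - w y^* C = C$, since $y^* C = 0$; and $GCG = G P = G$, by $P^2 = P$.

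For the norm bound, the decisive step is to show that $\|P\|_2 = \varrho$. I would compute
\begin{equation*}
P^* P = I - w y^* - y w^* + \varrho^2\, y y^*,
\end{equation*}
and observe that this operator acts as the identity on $\{x,y\}^\perp$ and preserves the two-dimensional subspace $\mathrm{span}(x, y)$. Since $P^* P x = 0$, zero is an eigenvalue of the restriction; the trace identity $\|P\|_F^2 = \mathrm{trace}(P^* P) = n - 2 + \varrho^2$ then pins the remaining eigenvalue in $\mathrm{span}(x, y)$ at $\varrho^2$. By Cauchy--Schwarz, $\varrho = 1/(y^* x) \ge 1$, so $\|P\|_2 = \varrho$, and consequently
\begin{equation*}
\|G\|_2 \le \|P\|_2 \,\|V\Xi U^*\|_2\, \|P\|_2 = \frac{\varrho^2}{\sigma_{n-1}(C)}.
\end{equation*}
The main point that requires care is the sharp value of $\|P\|_2$: the naive estimate $\|P\|_2 \le 1 + \varrho$ is too weak, since $\varrho$ can be arbitrarily large when $y^* x$ is close to zero. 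The trace argument above is the cleanest route I see, and it avoids invoking the general oblique-projection identity $\|P\|_2 = 1/\sin\theta$, where $\theta$ is the principal angle between $\mathrm{range}(P)$ and $\ker(P)$.
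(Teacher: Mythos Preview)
The paper does not prove this theorem; it is quoted from \cite{GGO12}, so there is no in-paper argument to compare against. Your proof is correct and self-contained. The verification that $CG = GC = P$ via the block SVD, followed by the trace computation showing $\|P\|_2 = \varrho$, is clean and complete. One small point worth making explicit: you tacitly use that $y^*x$ is real and positive (so that $\varrho>0$ and the inequality $\varrho\ge 1$ makes sense); this is not stated in the theorem itself but is the RP-compatible normalization used throughout the paper. Without it, the same argument yields $\|P\|_2 = |\varrho|$ and the bound becomes $|\varrho|^2/\sigma_{n-1}(C)$, which of course coincides with the stated estimate under the paper's conventions.
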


We can now establish a sufficient condition for local convergence.
\begin{theorem} 
\label{th:convergence}
Suppose that $(x,y)$ is a boundary fixed point of the map $M_\eps$ 
corresponding to a simple rightmost eigenvalue $\lambda$ of
$B=A+\epsyxstar\big|_\mc T$. Define
\begin{eqnarray}
    r=\frac{4\, \const\, \varrho^2\, \eps}{\sigma_{n-1}(A+\epsyxstar\big|_\mc T-\lambda I)}\;, \qquad \mbox{where} \ \varrho = \frac{1}{y^* x}
    \label{eq:convfac}
\end{eqnarray}
and $\const$ is the constant in Lemma {\rm \ref{lem:proj}}.
Then, if $r < 1$, and if $\delta_k=\|E_k\|_2$ is sufficiently small, then
$\lim_{j \rightarrow \infty} \lambda_{k+j} = \lambda$. Convergence is
at least linear with a rate less or equal to $r$.
\end{theorem}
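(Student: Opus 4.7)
The plan is to derive a scalar recursion of the form $\delta_{k+1}\le r\,\delta_k+\bigo(\delta_k^2)$ starting from the expansion \eqref{eq:errorformula} in Theorem~\ref{th:localerr}, from which geometric convergence of $\delta_k$ follows by a standard induction once $r<1$ and $\delta_k$ is small enough. The convergence $\lambda_{k+j}\to\lambda$ is then an immediate consequence of Lipschitz continuity of a simple eigenvalue: since $B_{k+j}=A+\eps L_{k+j-1}$, one has $B_{k+j}-B=E_{k+j}\to 0$, and the eigenvalue $\lambda_{k+j}$ chosen as closest to $\lambda_{k+j-1}$ tracks the simple eigenvalue $\lambda$ of $B$.

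First, I would estimate the three leading (linear in $E_k$) terms on the right-hand side of \eqref{eq:errorformula} in spectral norm. Since $x$ and $y$ are normalized so that $\|x\|_2=\|y\|_2=1$, the scalars $\beta_k=x^*GE_kx$ and $\gamma_k=y^*E_kGy$ satisfy $|\beta_k|,|\gamma_k|\le\|G\|_2\,\delta_k$, whence $|\Re(\beta_k+\gamma_k)|\le 2\|G\|_2\,\delta_k$. Moreover $\|L\|_2\le\|L\|_F=1$, so each of the matrix products $L\,E_k^*G^*$ and $G^*E_k^*L$ has spectral norm bounded by $\|G\|_2\,\delta_k$. Assembling these bounds in \eqref{eq:errorformula} yields
\begin{equation*}
\|F_{k+1}\|_2\le 4\eps\,\|G\|_2\,\delta_k+\bigo(\delta_k^2)\;.
\end{equation*}

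To convert this into an estimate for $\delta_{k+1}=\|E_{k+1}\|_2$, I would invoke Lemma~\ref{lem:proj} with $\delta=\|y_kx_k^*-yx^*\|_2=\|F_{k+1}\|_2/\eps$, which gives $\|L_k-L\|_2\le\const\,\|F_{k+1}\|_2/\eps$ and hence $\delta_{k+1}=\eps\|L_k-L\|_2\le\const\,\|F_{k+1}\|_2$. Bounding $\|G\|_2$ through Theorem~\ref{th:C} as $\|G\|_2\le\varrho^2/\sigma_{n-1}(A+\epsyxstar\big|_\mc T-\lambda I)$ then produces exactly
\begin{equation*}
\delta_{k+1}\le r\,\delta_k+\bigo(\delta_k^2)\;,
\end{equation*}
with $r$ as in \eqref{eq:convfac}. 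If $r<1$ and $\delta_k$ is chosen sufficiently small, the quadratic remainder can be absorbed into any prescribed $\tilde r\in(r,1)$, and a straightforward induction on $j$ yields $\delta_{k+j}\le\tilde r^{\,j}\delta_k\to 0$, i.e.\ linear convergence with rate no worse than $r$.

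The main obstacle will be the bookkeeping needed to maintain all hypotheses throughout the iteration: one has to verify that $x_{k+j},y_{k+j}$ remain RP-compatible, that the rightmost eigenvalue $\lambda_{k+j}$ of $B_{k+j}$ stays simple and is correctly identified as the one closest to $\lambda_{k+j-1}$, and that $\sigma_{n-1}(B_{k+j}-\lambda I)$ does not degenerate so that the bound from Theorem~\ref{th:C} remains uniform in $j$. All of these conditions are preserved for $\delta_k$ sufficiently small by continuity of simple eigenvalues and their spectral projectors, but the constants hidden in the $\bigo(\delta_k^2)$ term must be controlled uniformly along the orbit so that the induction actually closes.
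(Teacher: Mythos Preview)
Your argument is correct and follows essentially the same route as the paper: bound the three linear terms in \eqref{eq:errorformula} by $4\eps\|G\|_2\delta_k$, invoke Lemma~\ref{lem:proj} to pass between the unstructured error $F_{k+1}$ and the structured error $E_{k+1}$, and estimate $\|G\|_2$ via Theorem~\ref{th:C}. The only cosmetic difference is that the paper linearizes first (dropping the $\bigo(\delta_k^2)$ term to define a map $\mc N_\eps$) and tracks $\|F_k\|_2$ rather than $\|E_k\|_2$, applying Lemma~\ref{lem:proj} as $\|E_k\|_2\le\const\,\|F_k\|_2$ instead of $\|E_{k+1}\|_2\le\const\,\|F_{k+1}\|_2$; the resulting contraction factor $r$ is identical.
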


\begin{proof}
Assume $\delta_k$ is sufficiently small. According to Theorem
\ref{th:localerr}, for studying local convergence we consider the 
map $\mc N_\eps$ defined by
\begin{equation*}
\mc F_{k+1} =  \mc N_\eps \left( \mc F_k \right) = \eps\Big(
\mathrm{Re} (\beta_k + \gamma_k) L - L \mc E_k^* G^* -
G^*\mc E_k^* L  \Big)\;, \nonumber
\end{equation*}
with $\beta_k=x^* G\mc E_k x$ and $\gamma_k=y^*\mc E_k G  y$, where $\mc E_k$
depends on $\mc F_k$. 
 
Since $\| L \|_2=1$, Lemma \ref{th:C} yields 
\[
\| \mc N_\eps \left( \mc F_k \right) \|_2 \le
\frac{4 \varrho^2 \eps}{\sigma_{n-1}(A+\epsyxstar\big|_\mc T-\lambda I)} \| \mc E_k
\|_2 \le r\| \mc F_k \|_2\;.
\]
The convergence of $\mc F_k$ clearly implies that of $\mc E_k$.

So, if $r<1$, the map $\mc N_\eps$ is a contraction, and
the sequence $\{ \lambda_{k} \}$ converges to $\lambda$ with a linear rate
bounded above by $r$.
\end{proof}
\begin{theorem} 
\label{th:epssmall}
Assume that $\lambda(0)$ is a simple rightmost eigenvalue of $A$
and $\lambda(\eps)$ is a path of boundary fixed points of the map $M_\eps$.
Then the bound $r(\eps)$ in {\rm (\ref{eq:convfac})} is such that 
$\lim\limits_{\eps \rightarrow 0} r(\eps)=0$. 
\end{theorem}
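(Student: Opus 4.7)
My plan is to show that as $\eps\to 0$ the numerator of $r(\eps)$ vanishes linearly while the denominator stays bounded below by a positive constant; the conclusion then follows at once.

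First I would establish that along the path of boundary fixed points one has the limits $\lambda(\eps)\to\lambda(0)$, together with RP-compatible eigenvectors $x(\eps)\to x(0)$ and $y(\eps)\to y(0)$ as $\eps\to 0$. Since $\lambda(0)$ is a simple eigenvalue of $A$, the matrix $B(\eps):=A+\eps\, y(\eps)x(\eps)^*\big|_\mc T$ depends continuously on $\eps$ (here $\|y(\eps)x(\eps)^*\big|_\mc T\|_F=1$, so the perturbation is $O(\eps)$), and standard perturbation theory for simple eigenvalues gives the continuity of $\lambda(\eps)$ and of the associated (normalized) eigenvectors. In particular, by RP-compatibility $y(0)^*x(0)>0$ and hence
\[
\varrho(\eps)=\frac{1}{y(\eps)^*x(\eps)}\longrightarrow \frac{1}{y(0)^*x(0)}<+\infty\qquad\text{as }\eps\to 0,
\]
so $\varrho(\eps)^2$ remains bounded.

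Next I would control the denominator $\sigma_{n-1}(B(\eps)-\lambda(\eps) I)$. Since $\lambda(0)$ is a simple eigenvalue of $A$, the matrix $A-\lambda(0)I$ has rank exactly $n-1$, and therefore its smallest nonzero singular value satisfies
\[
\sigma_{n-1}\bigl(A-\lambda(0)I\bigr)>0.
\]
The matrix $B(\eps)-\lambda(\eps)I$ converges to $A-\lambda(0)I$ by the previous step, and singular values are Lipschitz continuous (in the spectral norm) with respect to perturbations. Consequently
\[
\sigma_{n-1}\bigl(B(\eps)-\lambda(\eps)I\bigr)\longrightarrow \sigma_{n-1}\bigl(A-\lambda(0)I\bigr)>0,
\]
so this quantity is bounded away from $0$ for $\eps$ small.

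Putting the two estimates together,
\[
r(\eps)=\frac{4\,\const\,\varrho(\eps)^2\,\eps}{\sigma_{n-1}(B(\eps)-\lambda(\eps)I)}
= O(\eps)\cdot\frac{1}{\sigma_{n-1}(A-\lambda(0)I)+o(1)}\longrightarrow 0
\]
as $\eps\to 0$, which is the claim. The only nontrivial point, and the one I expect to require some care, is the continuity of the path $(\lambda(\eps),x(\eps),y(\eps))$ of boundary fixed points at $\eps=0$: this follows from the implicit function theorem applied to the fixed-point system \eqref{p1}, whose Jacobian at $\eps=0$ is nondegenerate precisely because $\lambda(0)$ is a simple eigenvalue of $A$ with $y(0)^*x(0)\neq 0$.
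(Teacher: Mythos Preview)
Your argument is correct and follows essentially the same route as the paper: you show that $\varrho(\eps)\to 1/y(0)^*x(0)$ stays bounded and that $\sigma_{n-1}(B(\eps)-\lambda(\eps)I)\to\sigma_{n-1}(A-\lambda(0)I)>0$ by simplicity, so the factor $\eps$ in the numerator forces $r(\eps)\to 0$. The paper's proof is in fact terser than yours, simply invoking ``a continuity argument'' where you supply the perturbation-theoretic and implicit-function-theorem justifications.
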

\begin{proof}
We put in evidence the dependence of the fixed point on $\eps$ and denote
by $x(\eps)$ and $y(\eps)$ the eigenvectors associated to the fixed point $\lambda(\eps)$.

First observe that 
$
\lim_{\eps \goes 0} \varrho(\eps) = {1}/{y(0)^*x(0)} > 0.
$
Furthermore we have
$$
\lim\limits_{\eps \goes 0} \sigma_{n-1}\left(B-\lambda(\eps)I\right) = \sigma_{n-1}\left(A-\lambda(0) I\right) > 0\;
$$  
by the simplicity assumption for the rightmost eigenvalue of $A$, which extends to $\lambda(\eps)$ for sufficiently
small $\eps$ by a continuity argument.
\end{proof}
Theorem \ref{th:epssmall} implies that for sufficiently small $\eps$, Algorithm 1 converges 
at least linearly with a rate $\mathcal{O}(\eps)$. Anyway, numerical experiments show that the method converges also for large values of $\eps$. 

\begin{remark}
\label{rem:p3} 
We notice that the parameter $\varrho$ appearing in Theorem \ref{th:epssmall} is the condition number of $\lambda$. In the case of tridiagonal Toeplitz matrices the condition number is computed in \cite[Eq.\ (20)]{NPR}. In particular, calling 
$$
m=\frac{\min\{|\sigma_0|,|\tau_0|\}}{\max\{|\sigma_0|,|\tau_0|\}}\;,
$$
one deduces that $\varrho\rightarrow 1$, as $m\rightarrow 1$, and $\varrho\rightarrow\infty$ as $m\rightarrow 0$. Moreover, in the latter case the following asymptotics holds, 
$$
\varrho\approx \frac{1-\cos \frac{2h\pi }{n+1}}{n+1}
\left( \frac{1}{m}\right)^{\frac{n-1}{2}},
$$
with $h=1$ or $h=n$ depending on the displacement of the spectrum of $A$.
\end{remark}

\section{Examples}
\label{sec:5}

We provide here some illustrative examples.

\subsection*{Example 1}

We consider the tridiagonal $12 \times 12$ Toeplitz matrix
\begin{equation}
A =T_{}(s,d,t), \qquad s=\frac{-1+\i}{10}, \quad d=\frac{-3+4 \i}{10}, \quad t = 2+\i.
\label{eq:example1}
\end{equation}
\begin{figure}[ht]
\centerline{
\includegraphics[scale=0.35,trim= 0mm 0.01mm 0mm 0mm]{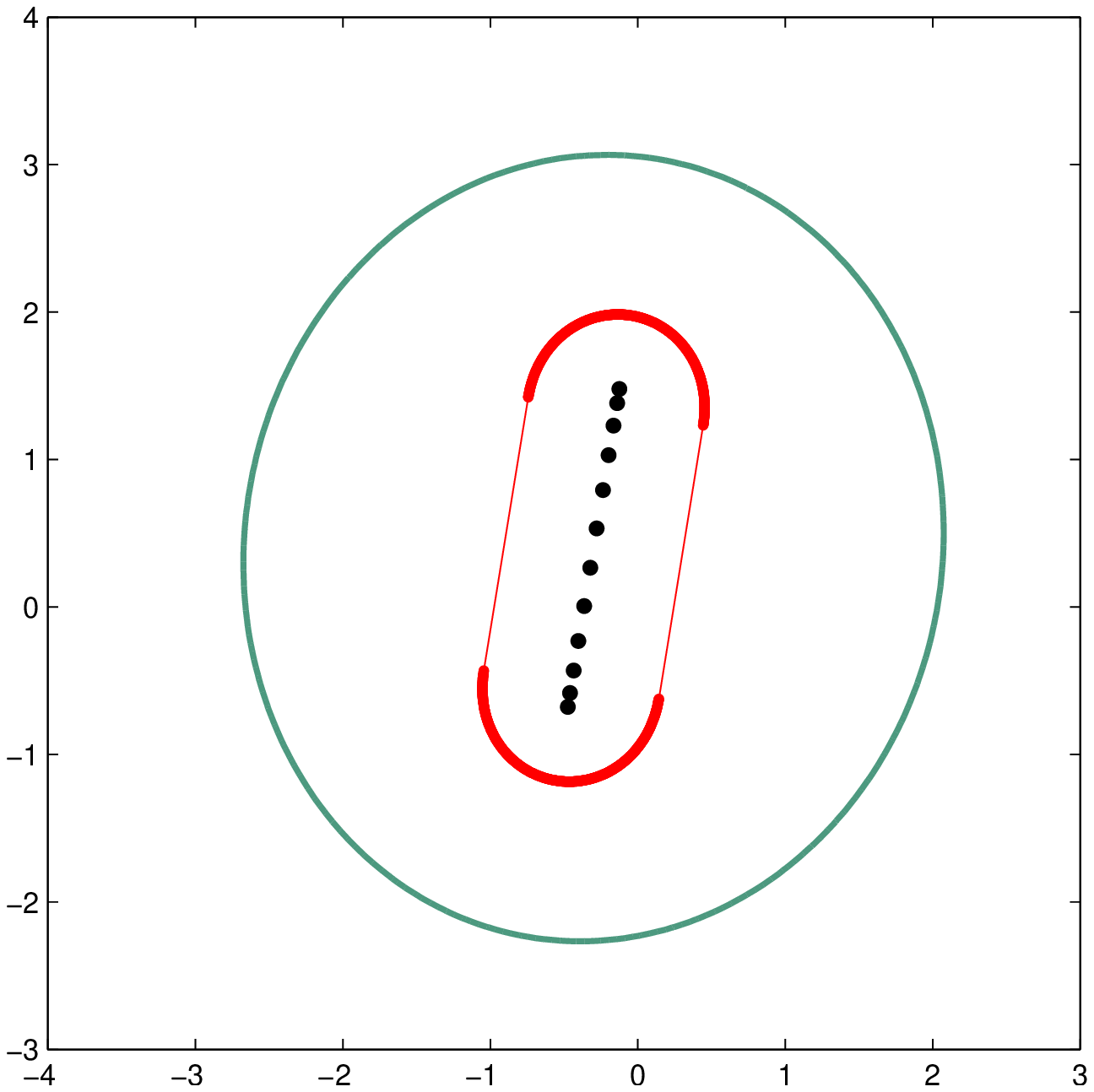}
\hskip 7mm
\includegraphics[scale=0.35,trim= 0mm 0.01mm 0mm 0mm]{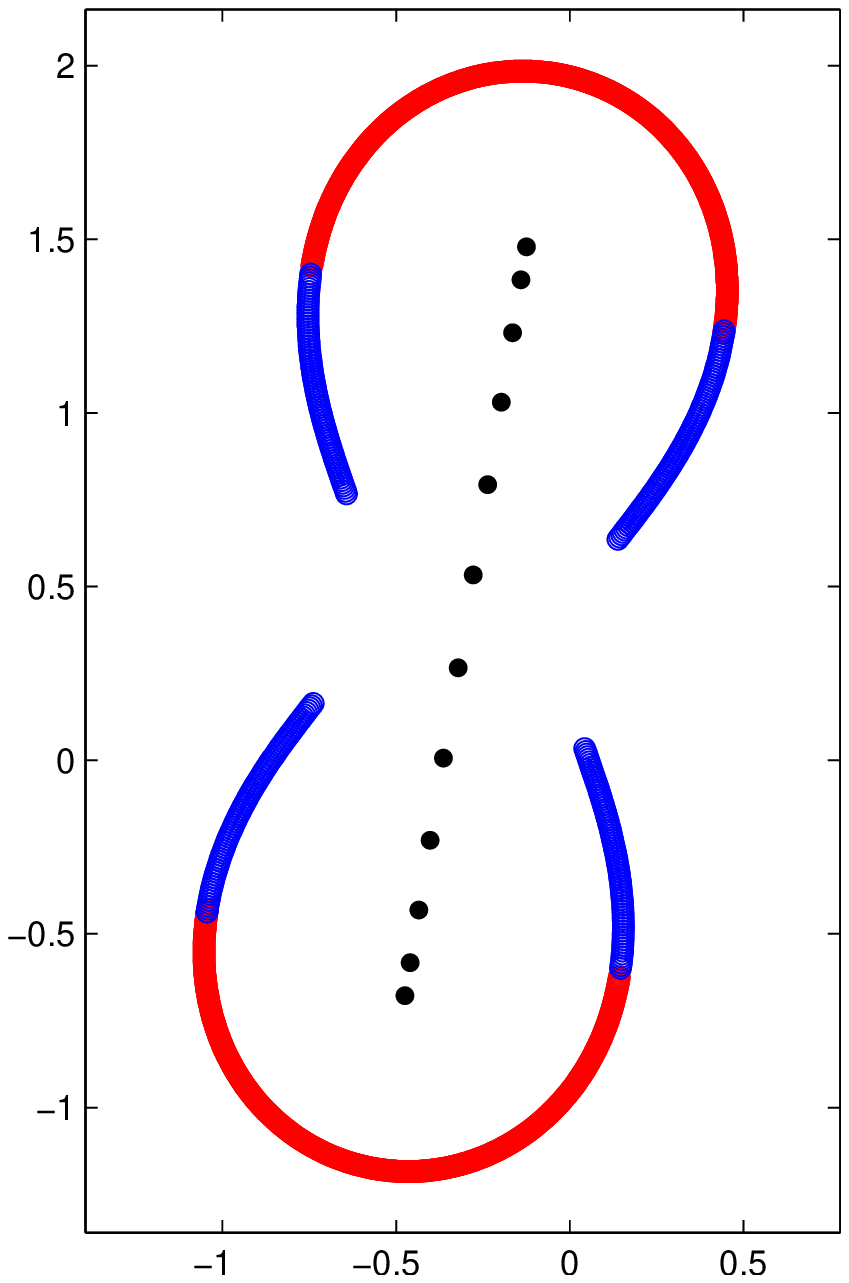}
\hskip 7mm
\includegraphics[scale=0.35,trim= 0mm 0.01mm 0mm 0mm]{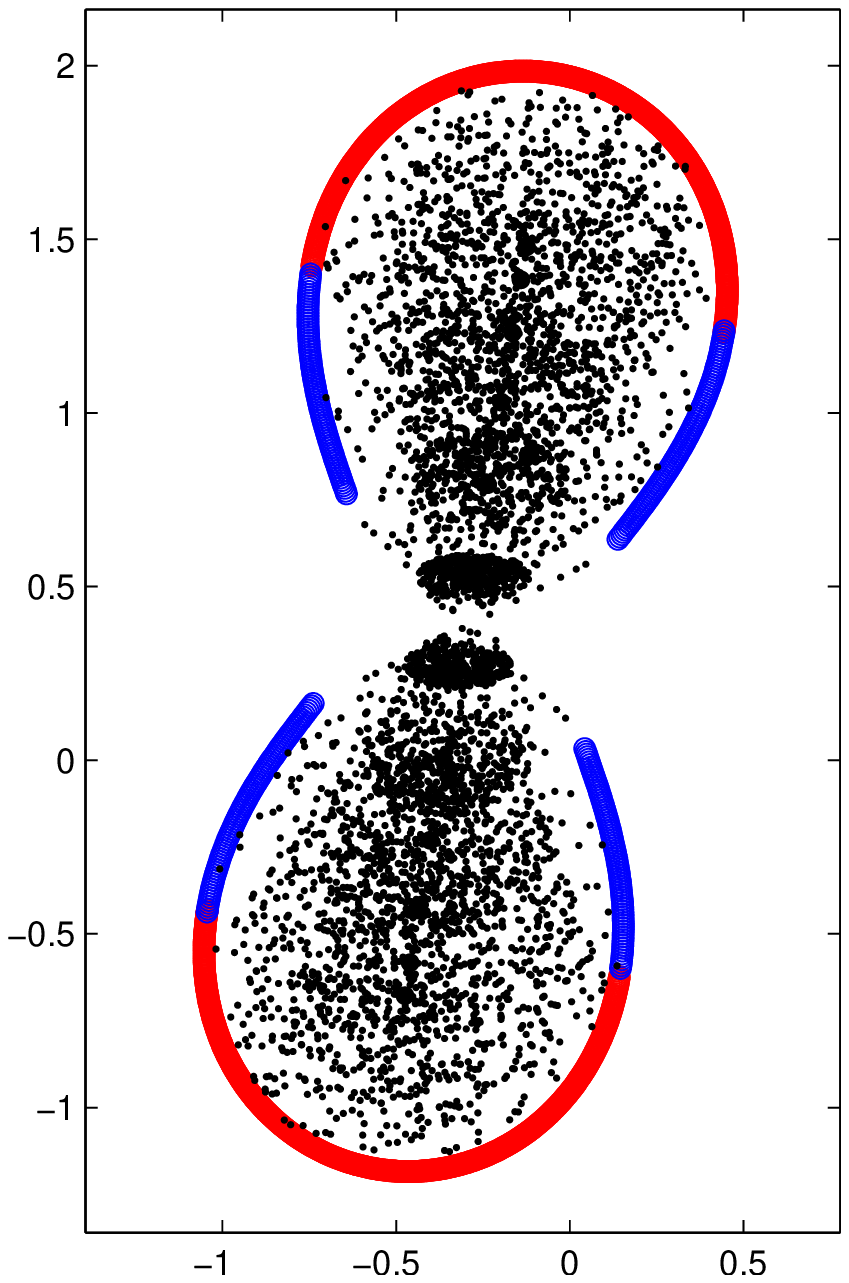}
}
\caption{Left picture: boundary of the unstructured pseudospectrum (gray oval) and section of the boundary of the structured $\eps$-pseudospectrum (in red), with $\eps=0.5$, of matrix (\ref{eq:example1}). Thin lines provide the convex hull of $\Lameps^{\mc T}(A)$.  Middle picture: further section (in blue) of $\partial \Lameps^{\mc T}(A)$. Right picture: black points are the spectra of $1000$ perturbed matrices obtained by adding the nominal matrix (\ref{eq:example1}) random perturbations with Gaussian distributed entries of Frobenius norm $\eps$. 
\label{fig:1}}
\end{figure}
In Figure \ref{fig:1}-left we plot the unstructured $\eps$-pseudospectrum and the computed section of the structured $\eps$-pseudospectrum for $\eps=0.5$, together with its convex hull (red points are boundary points computed by the rotated variant of Algorithm 1 discussed in Section \ref{subs:rot}, thin red lines give the convex hull). 

\begin{table}[ht]
\begin{center}
\begin{tabular}{l|l|l|}\hline
 $k$ & $\Re(\lambda_k)$ & $\alpha^{\mc T}_\eps(A) - \Re\left( \lambda_k \right)$ \\
\hline 
$0$ &  $-0.12508076372412$ & $0.59169457927624$ \\
$1$ &  $ 0.{\bf 4}1270494888923$ & $0.04056799024007$ \\                    
$3$ &  $ 0.{\bf 453}01543968544$ & $0.00025749944385$ \\                    
$5$ &  $ 0.{\bf 45327}100375008$ & $0.00000193537922$ \\                    
$7$ &  $ 0.{\bf 4532729}2456844$ & $0.00000001456086$ \\                    
$9$ &  $ 0.{\bf 453272939}01974$ & $0.00000000010956$ \\                    
$14$ & $ 0.{\bf 45327293912930}$ & $< 10^{-15}$
\\[0.3cm]
 \hline
\end{tabular}
\vspace{2mm}
\caption{Iterates and errors of Algorithm 1 applied to (\ref{eq:example1}) with $\eps=0.5$.}\label{tab:psa}
\end{center}
\end{table}

The behavior of Algorithm 1 is shown in Table \ref{tab:psa} and in Figure \ref{fig:2}-right 
where the iterates rapidly converge to the rightmost point. The estimated linear convergence rate is $r \approx 0.085$. 
In Figure \ref{fig:1}-middle we plot a further section of $\Lameps^{\mc T}(A)$
in the following way. Using the simple property 
\[
\Lameps^{\mc T}(A - \mu I) = \Lameps^{\mc T}(A) - \mu = \{ z = \lambda-\mu \colon \lambda \in \Lameps^{\mc T}(A) \}\;,
\]
we are able to use a variant of Algorithm 2 which converges to the point of minimal modulus of the
$\eps$-pseudospectrum of $A-\mu I$, being $\mu \in \bb C$ a point external to $\Lameps^{\mc T}(A)$.
The obtained value $\lambda_{\min}(\mu)$ is then shifted by $\mu$ and gives a point on the boundary of the
$\eps$-pseudospectrum.

\begin{figure}[ht]
\centerline{
\includegraphics[scale=0.35,trim= 0mm 0.01mm 0mm 0mm]{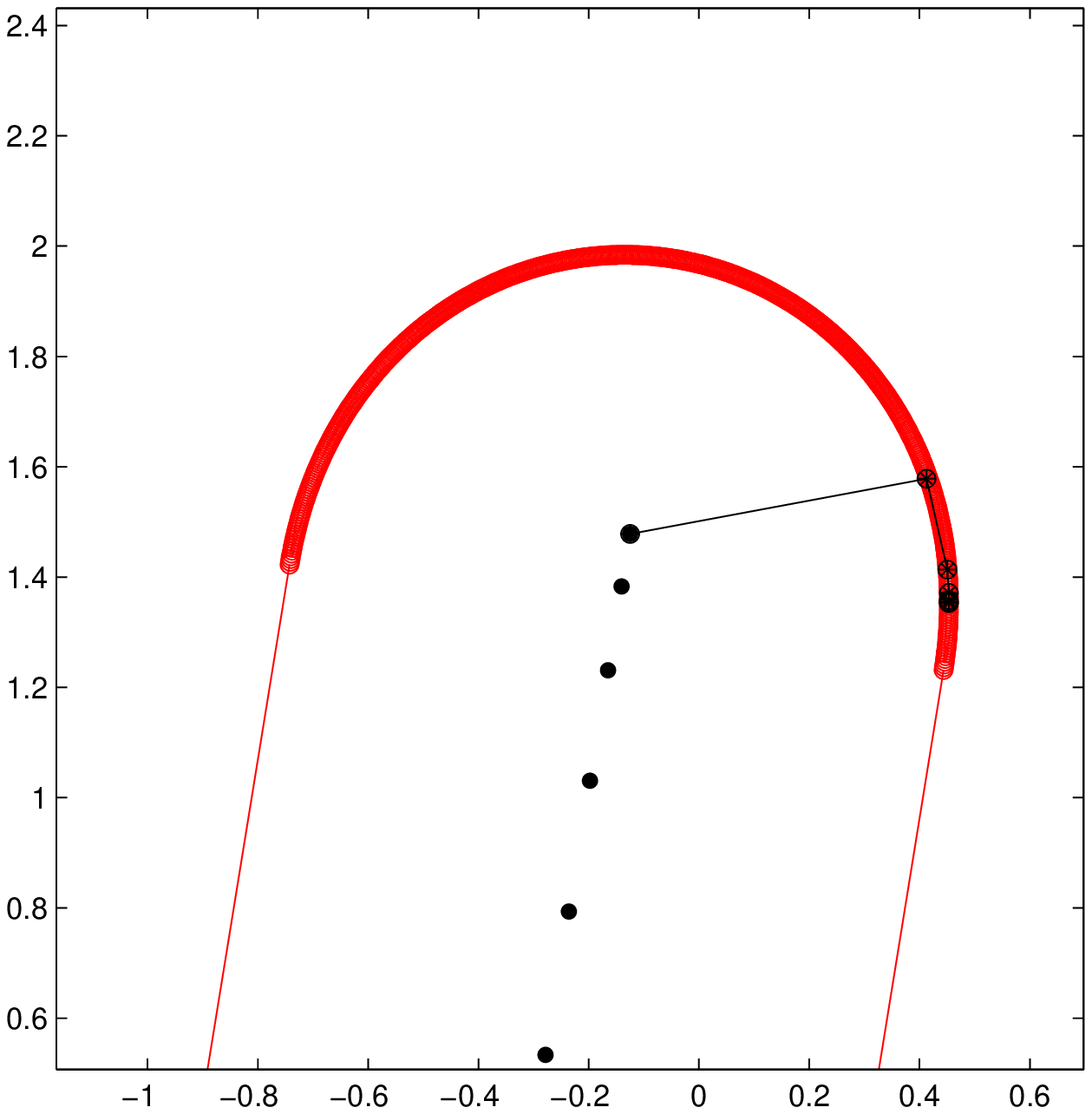}
\hskip 10mm
\includegraphics[scale=0.35,trim= 0mm 0.01mm 0mm 0mm]{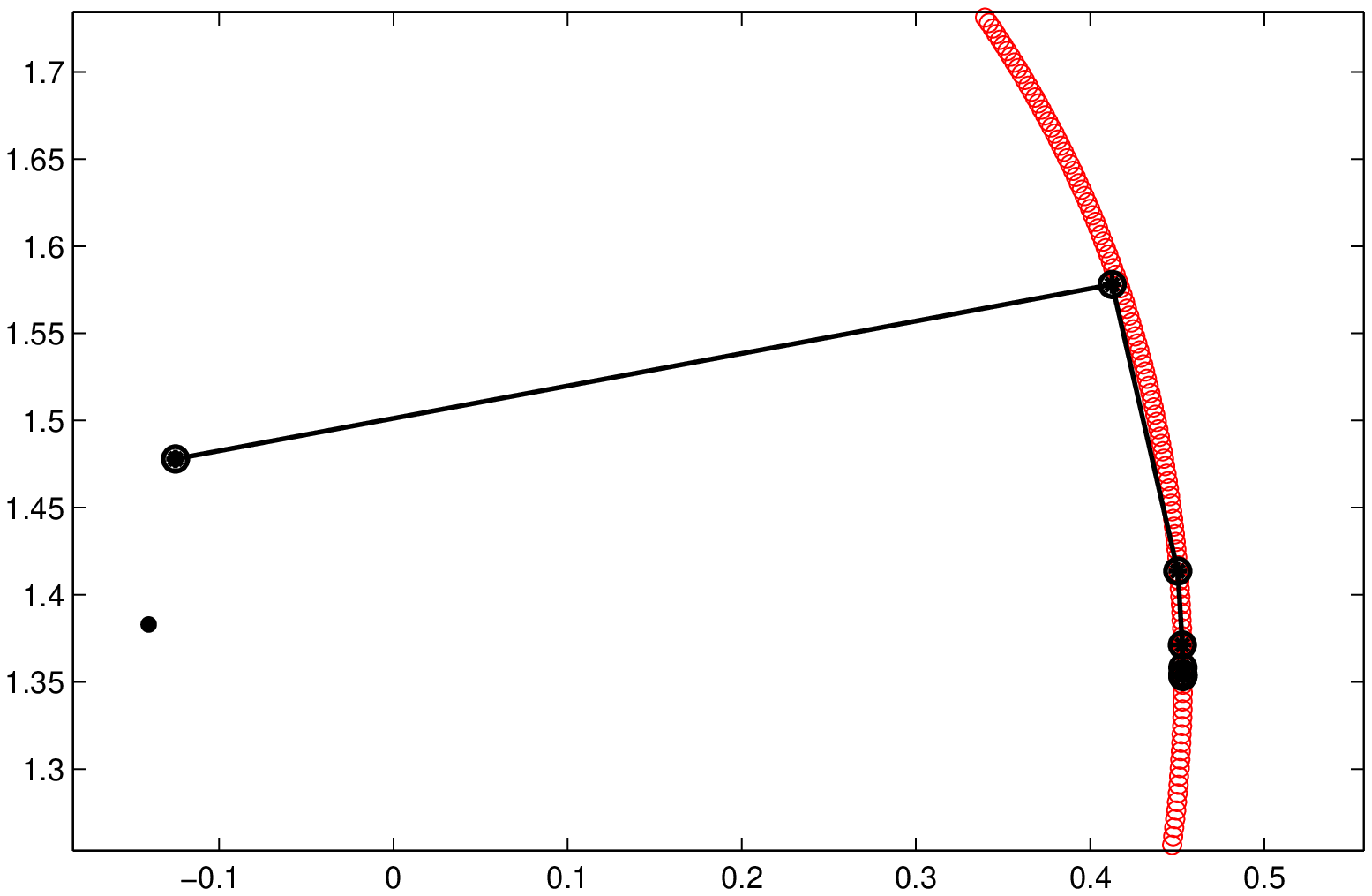}
}
\caption{In the left picture the zoom of the computed structured $\eps$-pseudospectrum. Right picture: 
iterates of Algorithm 1 converging to the rightmost point. \label{fig:2}}
\end{figure}

\subsection*{Example 2}
We consider the $30 \times 30$ pentadiagonal matrix 
\begin{equation}
A = T(0,10/19,0,0,10/19)
\label{eq:example2}
\end{equation}
generated by the symbol $\alpha(t)=\frac{10}{19} \left( t + t^{-2} \right)$.

\begin{figure}[h]
\centerline{
\includegraphics[scale=0.35,trim= 0mm 0.01mm 0mm 0mm]{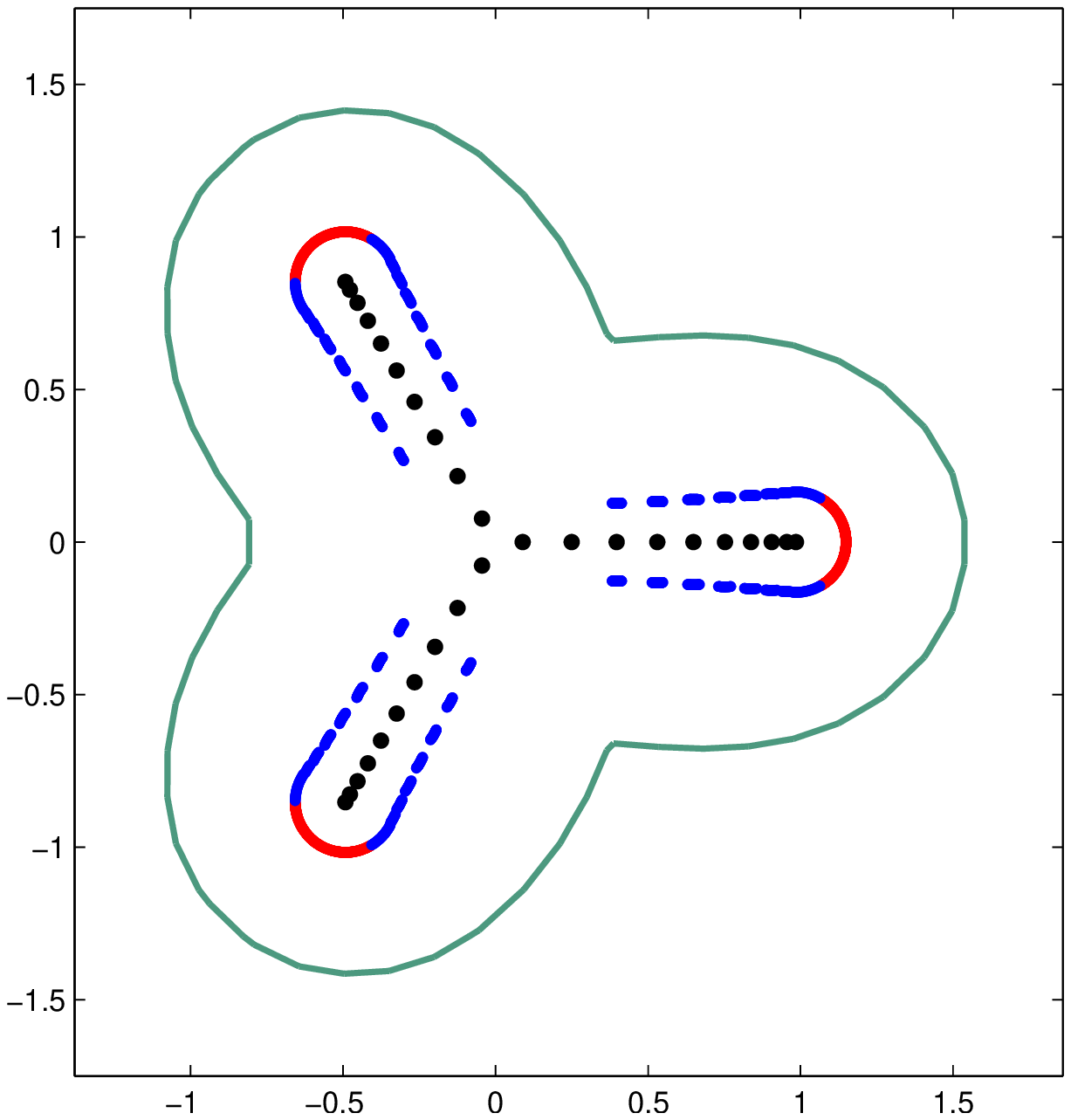}
\hskip 10mm
\includegraphics[scale=0.35,trim= 0mm 0.01mm 0mm 0mm]{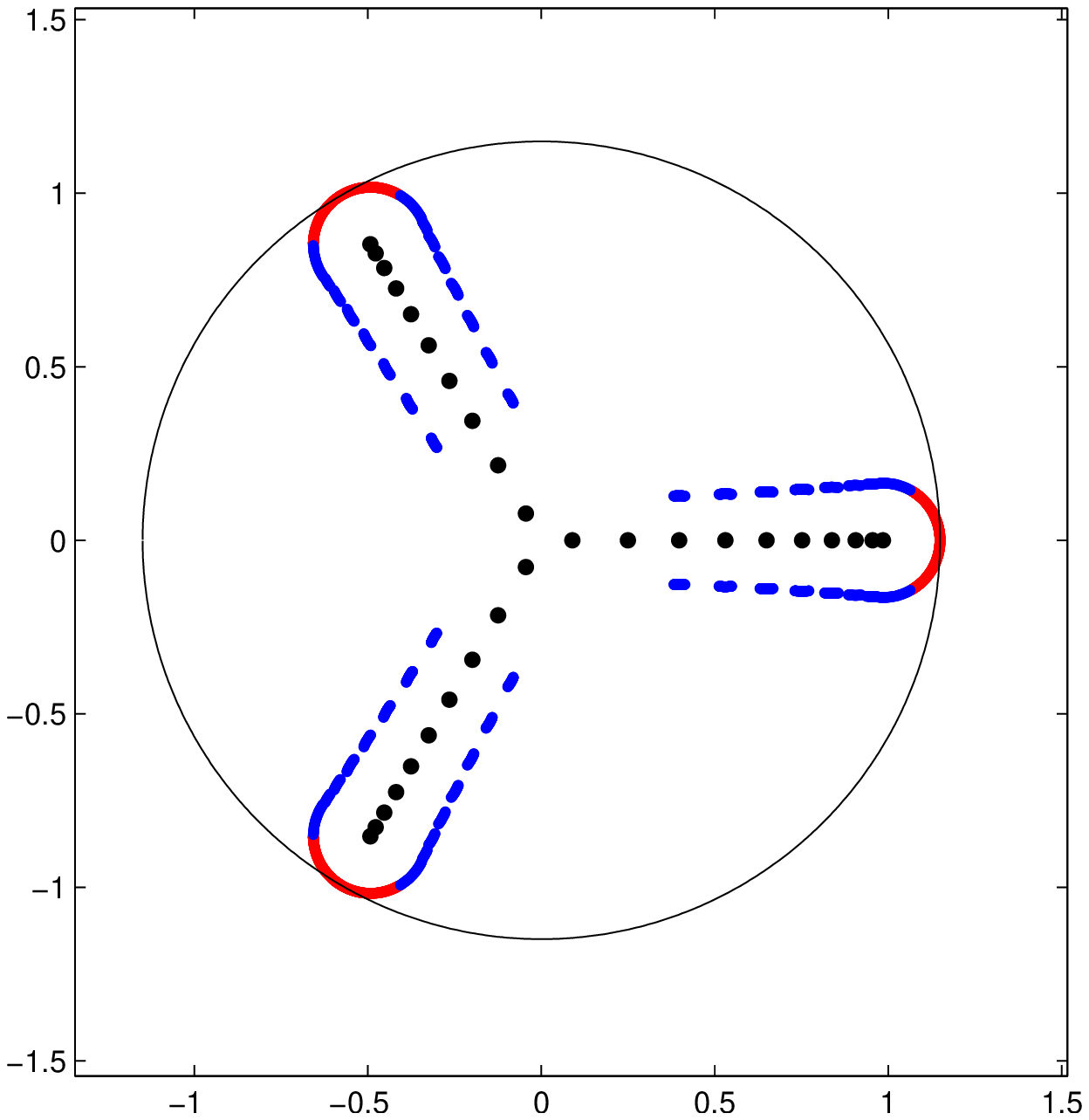}
}
\caption{Left picture: the unstructured $\eps$-pseudo\-spec\-trum of matrix (\ref{eq:example2}) is drawn with the section of the computed structured $\eps$-pseudospectrum for $\eps=0.5$. Right picture: the structured $\eps$-pseudo\-spec\-trum with the circle of radius $\rhoeps^{\mc T}(A)$. 
\label{fig:3}}
\end{figure}

In Figure \ref{fig:3} we show both the structured and sections of the unstructured pseudospectra
(the drawn blue section is not continuous, due to the fact that the boundary has oscillations
and the algorithm is not able to compute the corresponding concave parts).

\begin{figure}[!h]
\centerline{
\includegraphics[scale=0.35,trim= 0mm 0.01mm 0mm 0mm]{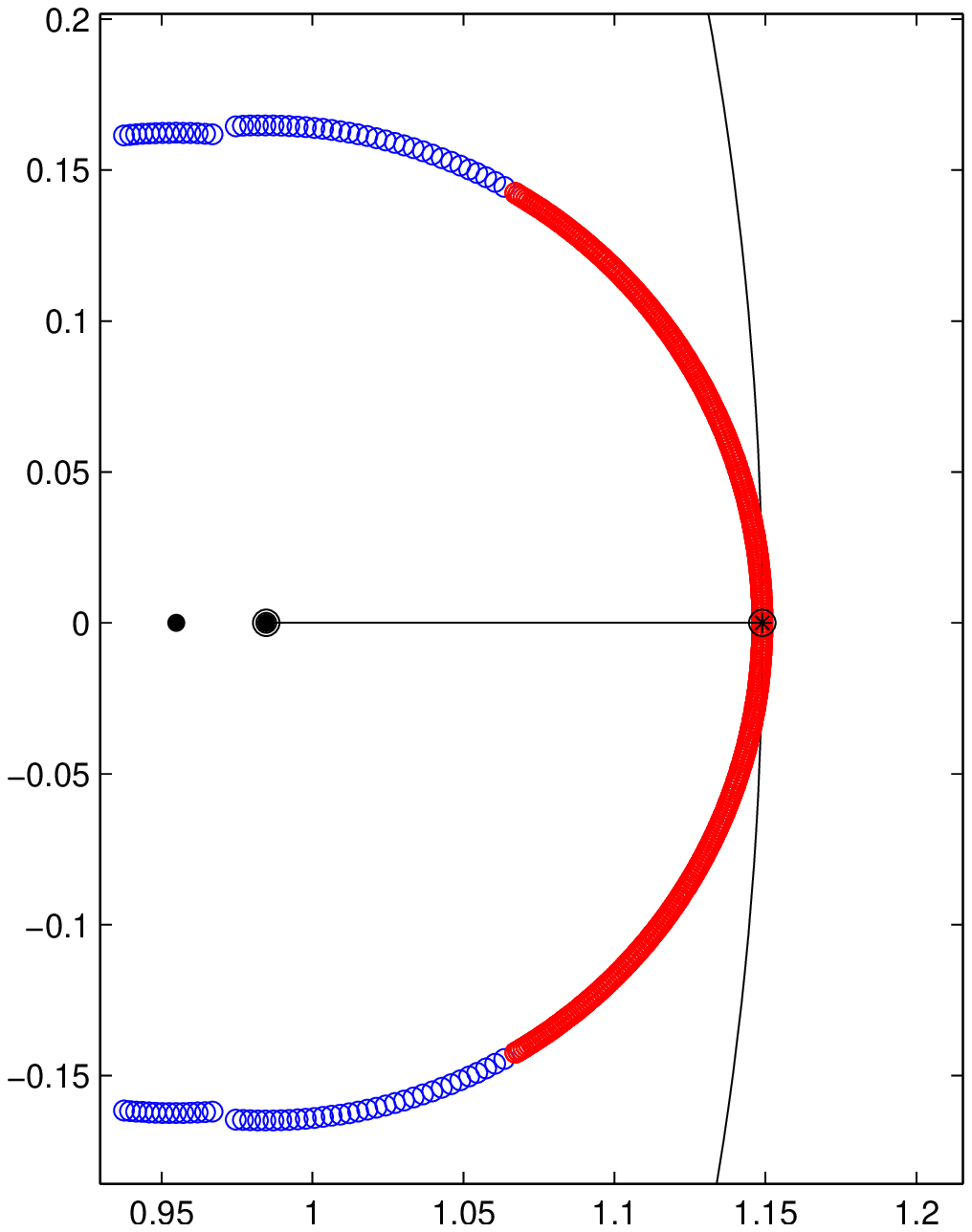}
\hskip 10mm
\includegraphics[scale=0.35,trim= 0mm 0.01mm 0mm 0mm]{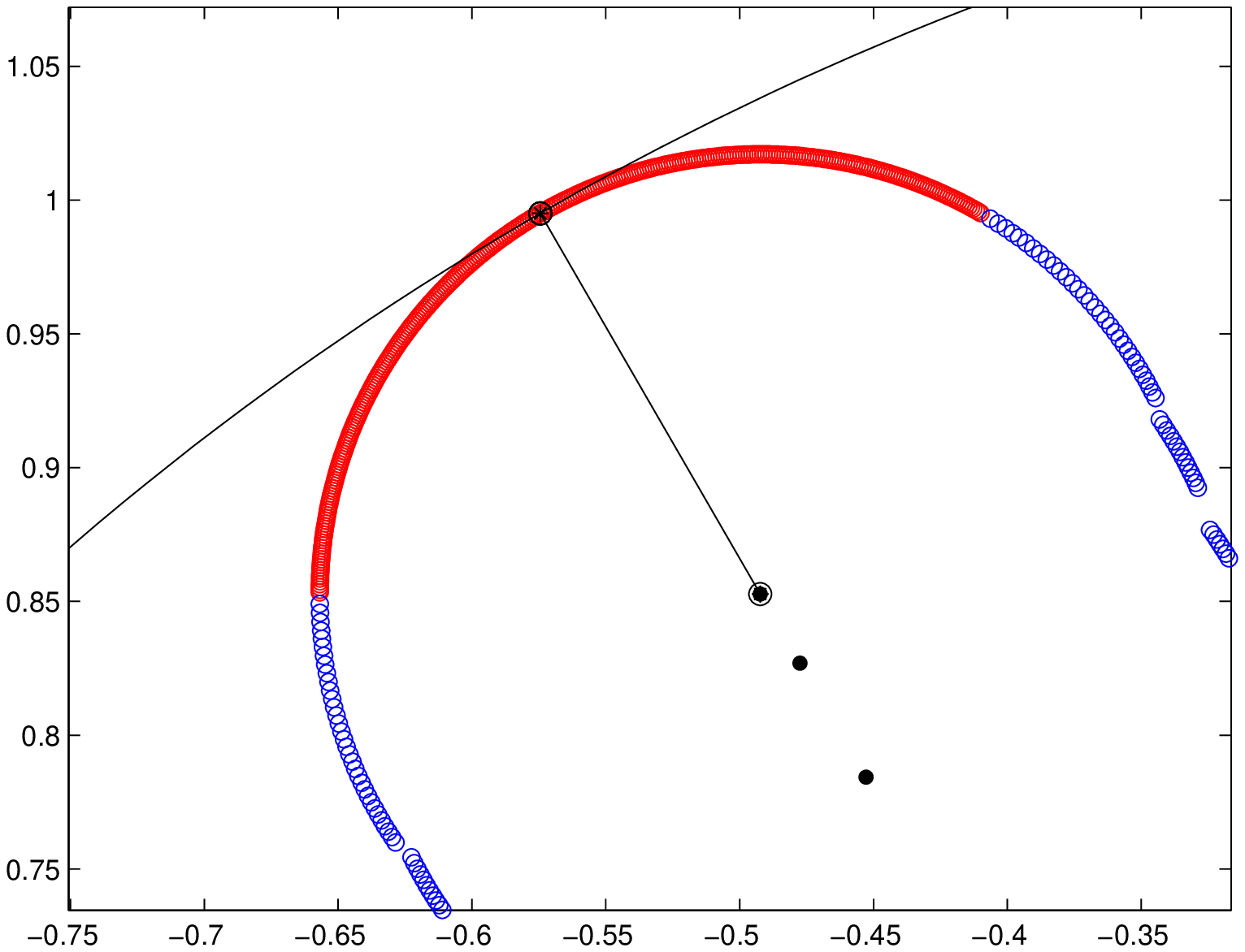}
}
\caption{Left picture: the iterates of Algorithm 1 applied to (\ref{eq:example2}) with $\eps=0.5$. 
Right picture: the iterates of Algorithm 2 applied to (\ref{eq:example2}) with $\eps=0.5$. \label{fig:4}}
\end{figure}

In Figure \ref{fig:4} we zoom the iterates generated by Algorithms 1 and 2 respectively to a rightmost point $\lambda_1$, that is $\Re(\lambda_1) = \aleps^{\mc T}(A)$ and to a point $\lambda_2$ of maximal modulus, that is $|\lambda_2| = \rhoeps^{\mc T}(A)$.

\subsection{Extension to Hankel matrices}

An extension to Hankel matrices is straightforward. We provide here an illustrative
example, complementary to Example 1.

\subsection*{Example 3}
We consider the anti-tridiagonal $12 \times 12$ Hankel matrix
\begin{equation}
A =H_{}(s,d,t), \qquad s=\frac{-1+\i}{10}, \quad d=\frac{-3+4 \i}{10}, \quad t = 2+\i
\label{eq:example3}
\end{equation}
that is the matrix with elements
\begin{eqnarray*}
&& a_{i,n+1-i} = d, \qquad i=1,\ldots,n
\\
&& a_{i,n-i} = s, \qquad i=1,\ldots,n-1
\\
&& a_{i+1,n+1-i} = t, \qquad i=1,\ldots,n-1.
\end{eqnarray*}
\begin{figure}[ht]
\centerline{
\includegraphics[scale=0.35,trim= 0mm 0.01mm 0mm 0mm]{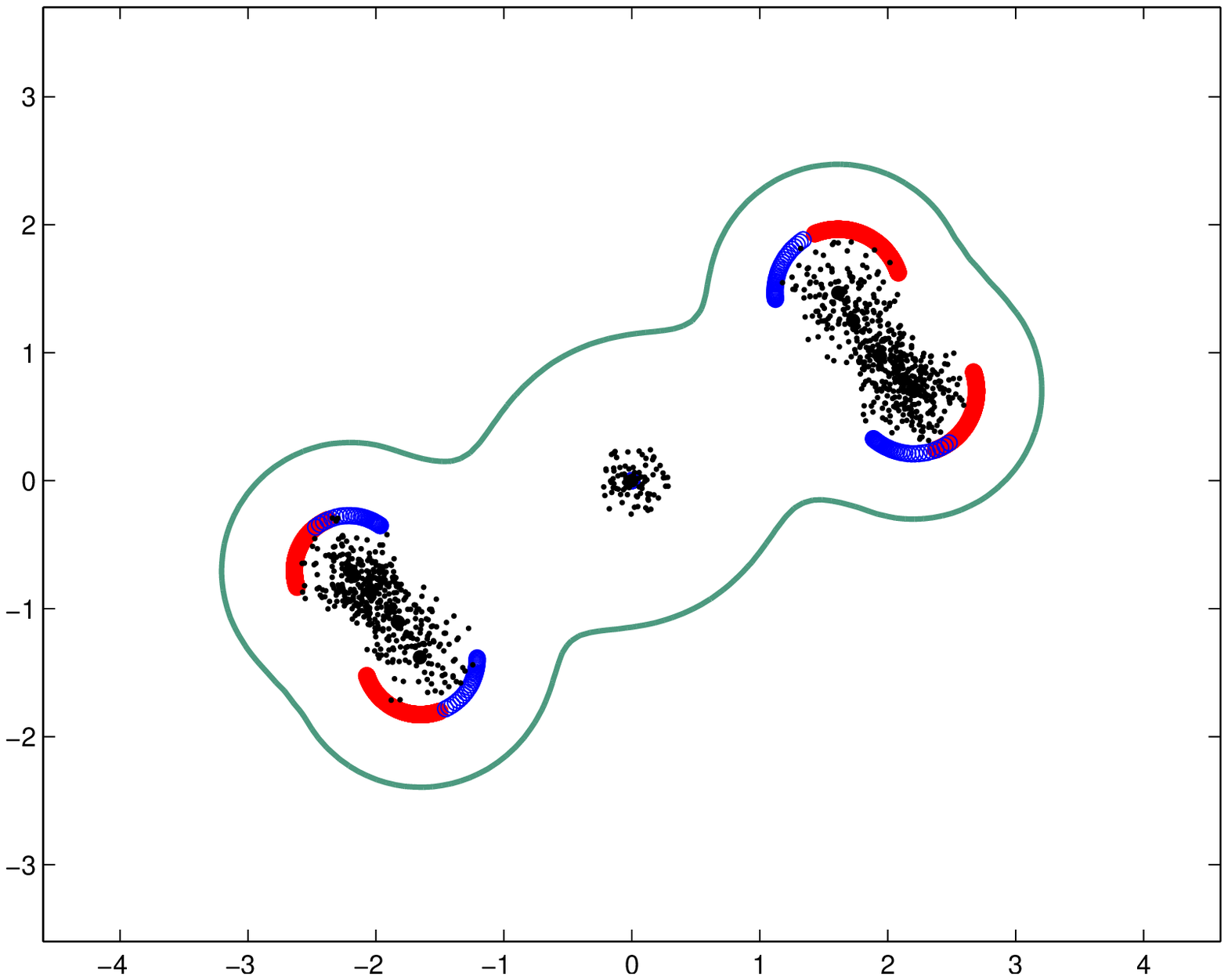}
\hskip 10mm
\includegraphics[scale=0.35,trim= 0mm 0.01mm 0mm 0mm]{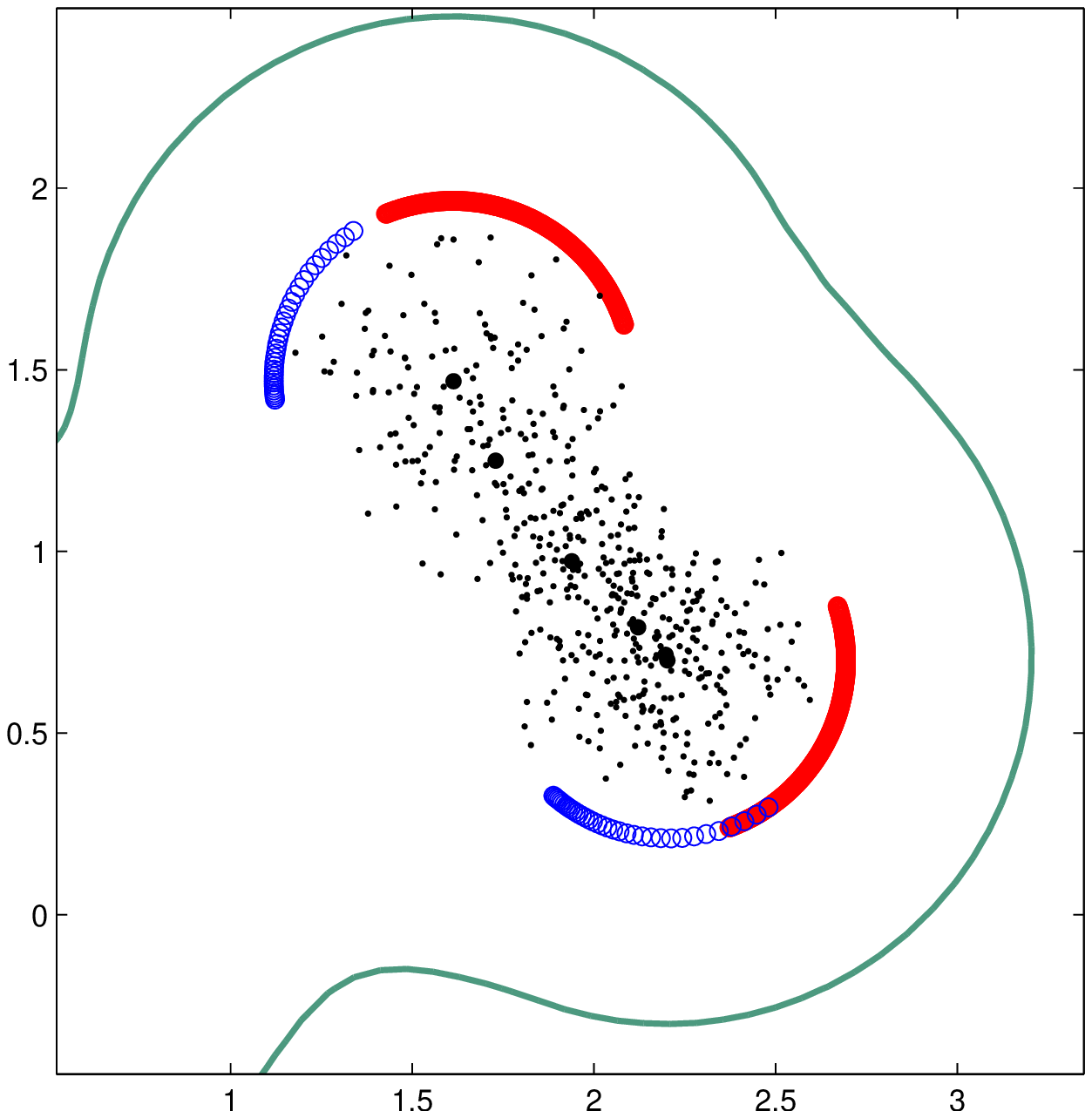}
}
\caption{Left picture: boundary of the the unstructured pseudospectrum (in gray) and section 
of the boundary of the structured $\eps$-pseudospectrum (in red and blue), with $\eps=1$, of the
Hankel matrix (\ref{eq:example3}). Black points are the spectra of $1000$ randomly selected perturbation 
matrices of norm $1$. 
Right picture: zoom. 
\label{fig:6}}
\end{figure}

The red section of the structured pseudospectrum is computed by the rotated implementation of
the basic algorithm to compute the pseudospectral abscissa. The blue section is computed by
a variant of the method to compute the pseudospectral radius. 

\subsection*{Acknowledgments}

We thank the Italian M.I.U.R. and G.N.C.S. for supporting this work.


\begin{thebibliography}{99}

\bibitem[BGK01]{BGK01}
A.~B\"ottcher, S.~Grudsky and A.~Kozak, On the distance
of a large Toeplitz band matrix to the nearest singular matrix, in:
Toeplitz Matrices and Singular Integral Equations (Pobershau, 2001),
Operational Theory Advances and Application, vol. 135, Birkh\"auser,
Basel, 2002, pp.~101--106.

\bibitem [G06]{G06}
S.~Graillat, A note on structured pseudospectra, J.
Comput. Appl. Math., 191 (2006), pp.~68--76.

\bibitem[GL11]{GL11}
N.~Guglielmi and Ch.~Lubich,
\newblock Differential equations for roaming pseudospectra: paths to extremal
points and boundary tracking,
SIAM J. Numer. Anal., 49  (2011), pp.~1194--1209.

\bibitem[GL12]{GL12}
N.~Guglielmi and Ch.~Lubich,
Low-rank dynamics for computing extremal points of real and complex pseudospectra,
in preparation (2011-12).

\bibitem[GO11]{GO11}
N.~Guglielmi and M.~Overton, 
Fast algorithms for the approximation of the pseudospectral 
abscissa and pseudospectral radius of a matrix, SIAM J. Matrix Anal. Appl., 32 (2011), pp.~1166--1192.

\bibitem[GGO12]{GGO12}
N.~Guglielmi, M.~Gurbuzbalaban and M.~Overton, 
A novel method for the fast approximation of the $H_\infty$-norm of a linear dynamical system, 
in preparation (2011-12).

\bibitem[HH92]{HH}
D.~J.~Higham and N.~J.~Higham, Backward {E}rror and {C}ondition of {S}tructured
 {L}inear {S}ystems, {SIAM} J. Matrix Anal. Appl., 13 (1992), pp.~162--175.

\bibitem[KKK10]{KKK10}
M.~Karow, E.~Kokiopoulou, and D.~Kressner,
On the computation of structured singular values and pseudospectra,
Systems Control Lett., 59 (2010), pp.~122--129.

\bibitem[KKT06] {KKT}
M.~Karow, D.~Kressner, and F.~Tisseur, Structured eigenvalue condition numbers,
SIAM J. Matrix Anal. Appl., 28 (2006), pp.~1052--1068.

\bibitem[MS88]{MS88}
C.~D.~Meyer and G.~W.~Stewart, Derivatives and perturbations of eigenvectors, 
SIAM J. Numer. Anal., 25 (1988), pp.~679--691.

\bibitem[NP07] {NP} S.~Noschese, L.~Pasquini, Eigenvalue patterned condition 
numbers: Toeplitz and Hankel cases,  J. Comput. Appl. Math., 206 (2007), pp.~615--624. 

\bibitem[NPR11]{NPR} 
S.~Noschese, L.~Pasquini and L.~Reichel, Tridiagonal Toeplitz Matrices: Properties and Novel Applications,
Numerical Linear Algebra with Applications, in press, 2011.

\bibitem[RT92] {RT}
L.~Reichel and L.~N.~Trefethen, Eigenvalues and pseudo-eigenvalues of Toeplitz
matrices, Linear Algebra Appl., 162-164 (1992), pp.~153--185.

\bibitem[R06] {R06}
S.~M.~Rump,  Eigenvalues, pseudospectrum and structured
perturbations, Linear Algebra Appl., 413 (2006), pp.~567--593.

\bibitem[TE05] {TE}
L.~N.~Trefethen and M.~Embree, Spectra and Pseudospectra, Princeton University 
Press, Princeton, 2005.

\bibitem[Wri02]{Wri02}
T.~G. Wright, Eigtool: a graphical tool for nonsymmetric eigenproblems.
Oxford University Computing Laboratory,
{\em http://www.comlab.ox.ac.uk/\-pseudospectra/\-eigtool/}, 2002.

\end{thebibliography}
\end{document}